\newtheorem{theorem}{Theorem}[section]
\newtheorem{lemma}[theorem]{Lemma}
\newtheorem{corollary}[theorem]{Corollary}
\title{{\bf The $A_\alpha$-spectral radius of graphs with given degree sequence\thanks{Supported by NSFC (No.11531011), NSFXJ(No.2015KL019).}}}
\author{ Dan Li, Yuanyuan Chen, Jixiang Meng\footnote{Corresponding author. Email: mjxxju@sina.com}
\\
{\footnotesize College of Mathematics and System Sciences, Xinjiang University, Urumqi, Xinjiang, P.R.China}}
\date{}
\begin{document}
\maketitle

\begin{abstract}
Let $G$ be a graph with adjacency matrix $A(G)$, and let $D(G)$ be the diagonal matrix of the degrees of $G$. For any real $\alpha\in[0,1]$, write $A_\alpha(G)$ for the matrix $$A_\alpha(G)=\alpha D(G)+(1-\alpha)A(G).$$
This paper presents some extremal results about the spectral
radius $\rho(A_\alpha(G))$ of $A_\alpha(G)$ that generalize previous results about
$\rho(A_0(G))$ and $\rho(A_{\frac{1}{2}}(G))$. In this paper, we give some results on graph perturbation for $A_\alpha$-matrix with $\alpha\in [0,1)$. As applications, we characterize all extremal trees with the maximum $A_\alpha$-spectral radius in the set of all trees with prescribed degree sequence firstly. Furthermore, we characterize the unicyclic graphs that have the largest $A_\alpha$-spectral radius for a given unicycilc degree sequence.

\bigskip
\noindent {\bf AMS Classification:} 05C50, 05C12

\noindent {\bf Key words:} $A_\alpha$-matrix; Spectral radius; Tree; Unicyclic; Degree sequence
\end{abstract}

\section{Introduction}
All graphs considered in this paper are simple and undirected. Let $G$ be a graph with adjacency matrix $A(G)$, and let $D(G)$ be the diagonal matrix of the degrees of $G$. For any real $\alpha\in[0,1]$, Nikiforov\cite{V. Nikiforov2} defined the matrix $A_\alpha(G)$ as $$A_\alpha(G)=\alpha D(G)+(1-\alpha)A(G).$$It is clear that $A_\alpha(G)$ is adjacency matrix if $\alpha=0$, and $A_\alpha(G)$ is essentially equivalent to signless Laplacain matrix if $\alpha=\frac{1}{2}$. We denote the eigenvalues of $A_\alpha(G)$ by $\lambda_1(A_\alpha(G))\geq\lambda_2(A_\alpha(G))\geq\cdots\geq\lambda_n(A_\alpha(G))$. Write $\rho(A_\alpha(G))$ for the spectral radius of $A_\alpha(G)$ and call it $A_\alpha$-spectral radius of $G$. Let $d_G(v)$, or $d(v)$ for short, be the degree of the vertex $v$ in $G$. A nonincreasing sequence of nonnegative integers $\pi=(d_0,d_1,\ldots,d_{n-1})$ is called {\it graphic} if there exists a simple graph $G$ with order $n$ having $\pi$ as its vertex degree sequence. Let $N_G(v)$ or $N(v)$ denote the neighbor set of vertex $v$ of $G$.

One of the central issues in spectral extremal graph theory is: For a graph matrix, determine the maximization or minimization of spectral invariants over various families of graphs. Nikiforov et al.\cite{V. Nikiforov4} showed that $P_n$, the path of order $n$, has minimal $A_\alpha$-spectral radius among all connected graphs of order $n$. Xue, Lin et al.\cite{Xue Jie1} determined the unique graph with maximum $A_\alpha$-spectral radius among all connected graphs with diameter $d$, and determined the unique graph with minimum $A_\alpha$-spectral radius among all connected graphs with given clique number. Nikiforov\cite{V. Nikiforov5} presented some extremal results about the $A_\alpha-$spectral
radius that generalize previous results about $\rho(A_0(G))$ and $\rho(A_{\frac{1}{2}}(G))$.

Zhang \cite{Zhang xiao dong3,Zhang xiao dong2} posed the following problem about $\rho(A_0(G))$ and $\rho(A_{\frac{1}{2}}(G))$. We generalize it to $0\leq \alpha<1.$

\vspace*{3mm}
\noindent{\bf Problem.}\label{p1} For a given graphic degree sequence $\pi$, let $0\leq \alpha<1$ and
$$\mathcal{G}_{\pi}=\{G\mid \mbox{$G$ is connected with $\pi$ as its degree sequence}\}.$$
Find the upper (lower) bounds for the $A_\alpha$-spectral radius of all graphs $G$ in $\mathcal{G}_{\pi}$ and characterize all extremal graphs which attain the upper (lower) bounds.

Biyiko\u{g}lu et al.\cite{T. Biyikolu} determined the unique tree with maximum
$A_0$-spectral radius in the set of all trees with prescribed degree sequence. Belardo et al.\cite{B. Francescoelardo} determined the (unique) graphs with the largest $A_0$-spectral radius in the set of all unicyclic graphs with prescribed degree sequence. Zhang \cite{Zhang xiao dong3} determined the unique tree with maximum
$A_{\frac{1}{2}}$-spectral radius in the set of all trees with prescribed degree sequence. Zhang \cite{Zhang xiao dong2} characterized the unicyclic graphs that have the largest $A_{\frac{1}{2}}$-spectral radius in the set of all unicyclic graphs for a given unicycilc graphic degree sequence. The main goal of this paper is to extend their results for all $\alpha\in[0,1).$

To generalize these results, we first consider some results on graph perturbation for $A_\alpha$-matrix in section 2. Using these basic tools, we first determine the unique tree with maximum $A_\alpha$-spectral radius in the set of all trees with prescribed degree sequence in section 3.

\begin{theorem}\label{Theorem3.3} For a given nonincreasing tree degree sequence $\pi=(d_0,d_1,\ldots,d_{n-1})$, if $0\leq \alpha<1$, then $\mathcal{T}^*_{\pi}$ (see in section 3) has largest $A_\alpha$-spectral radius in the class of all trees with degree sequence $\pi$.
\end{theorem}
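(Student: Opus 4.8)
The plan is to characterize the extremal tree $\mathcal{T}^*_\pi$ as the \emph{greedy tree} (or breadth-first-search tree) associated with the degree sequence $\pi$, and then to show that any tree which is not isomorphic to $\mathcal{T}^*_\pi$ can be strictly improved by a local perturbation, so that the maximizer—which exists because the class of trees with a fixed degree sequence is finite—must be $\mathcal{T}^*_\pi$. The greedy tree is built by rooting at a vertex of maximum degree $d_0$, assigning the next-largest available degrees to its neighbors, and continuing level by level so that vertices closer to the root always receive larger degrees. I would first fix a Perron eigenvector $x$ of $A_\alpha(G)$ for the current tree $G$, recalling that since $G$ is connected and $A_\alpha(G)$ is nonnegative and irreducible for $\alpha<1$, the Perron--Frobenius theorem guarantees $x$ is positive and $\rho(A_\alpha(G))$ is a simple eigenvalue.

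The engine of the argument is the graph-perturbation machinery from Section~2. The key tool I expect to use is an \emph{edge-switching} lemma: if $x$ is the Perron vector and $u,v$ are two vertices with $x_u \ge x_v$, and $w \in N(v)\setminus(N(u)\cup\{u\})$ while $w' \notin N(u)$ is some vertex we wish to move, then deleting the edge $vw$ and adding $uw$ (a \emph{Kelmans-type} or degree-preserving rotation) does not decrease $\rho(A_\alpha(G))$, with equality only under a symmetry condition. Concretely, for such a rotation the change in the Rayleigh quotient $x^{\mathsf T}A_\alpha(G)x$ can be written, after accounting for the shift of the degree-diagonal contribution, in a form proportional to $(1-\alpha)(x_u-x_v)x_w$ plus diagonal terms that also favor concentrating degree at the higher-potential vertex; since $\alpha<1$ the factor $(1-\alpha)>0$ keeps the adjacency part active, which is exactly why the result extends from $\alpha=0$ and $\alpha=\tfrac12$ to the whole range $[0,1)$. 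Using the variational characterization $\rho(A_\alpha(G)) \ge x^{\mathsf T}A_\alpha(G)x$ (with $x$ the old Perron vector normalized to unit length) then yields the desired monotonicity.

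With this lemma in hand, the combinatorial heart of the proof is to show that the greedy tree is the unique fixed point of all such improving moves. I would argue by contradiction: suppose $T$ is a tree with degree sequence $\pi$ attaining the maximum, and suppose $T \not\cong \mathcal{T}^*_\pi$. Order the vertices of $T$ by their Perron-weights; one shows that in a maximizer the Perron-weight order must be consistent with a BFS layering from a maximum-weight (hence, by a separate step, maximum-degree) root, and that within each layer higher degrees must attach to higher-weight parents. Any violation of the greedy-tree adjacency pattern exhibits a configuration to which the switching lemma applies \emph{strictly}, contradicting maximality. The main obstacle, and the step requiring the most care, is precisely this bookkeeping: translating "not the greedy tree" into a concrete pair of edges whose rotation strictly increases $\rho(A_\alpha)$ while preserving the degree sequence. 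This is delicate because a single edge swap can alter two vertices' degrees, so to stay within $\mathcal{G}_\pi$ one typically needs a coordinated \emph{double} switch (moving a whole pendant branch), and one must verify both that the branch-transfer keeps $\pi$ fixed and that the net Rayleigh-quotient change is strictly positive; I would handle this by comparing entire rooted subtrees via their Perron-weights and inducting on the distance from the root.
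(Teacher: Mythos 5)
Your proposal takes essentially the same route as the paper's own proof: the perturbation machinery of Section~2 (the rotation of Lemma~\ref{Lemma2.1} and the degree-preserving double switch of Lemma~\ref{Lemma2.3}, applied to a positive Perron vector of $A_\alpha$) is used to show that any maximizer admits a BFS-ordering (Theorem~\ref{Theorem2.7}), and the conclusion then follows from the uniqueness of the BFS-ordered (greedy) tree with degree sequence $\pi$ (Lemma~\ref{Lemma3.1}). The difficulty you flag---that single rotations change degrees, so one needs coordinated two-edge switches to stay in $\mathcal{G}_\pi$---is exactly what the paper's Lemmas~\ref{Lemma2.3} and~\ref{Lemma2.4} handle.
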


In section 4, we characterize the unicyclic graphs with maximum $A_\alpha$-spectral radius in the set of all unicyclic graphs with prescribed degree sequence.

\begin{theorem}\label{Theorem4.7} Let $\pi=(d_0,d_1,\ldots,d_{}n-1)$ be a positive nonincreasing integer sequence with $\sum\limits_{i=0}^{n-1}d_i=2n$. If $0\leq \alpha<1$, then $\rho(A_{\alpha}(\mathcal{U}^*_{\pi}))= \max\{\rho(A_{\alpha}(H))\mid \mbox{H is unicyclic and} ~H\in \mathcal{G}_{\pi}\}$
, where $\mathcal{U}^*_{\pi}$ is shown in section 4.
\end{theorem}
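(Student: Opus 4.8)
The plan is to argue by extremality. Let $G$ be a unicyclic graph in $\mathcal{G}_{\pi}$ whose $A_\alpha$-spectral radius is maximum, and let $\mathbf{x}=(x_v)$ be its Perron eigenvector. Since $G$ is connected and $0\le\alpha<1$, the matrix $A_\alpha(G)$ is nonnegative and irreducible, so by Perron--Frobenius $\mathbf{x}$ is positive and $\rho(A_\alpha(G))=\mathbf{x}^{\top}A_\alpha(G)\mathbf{x}/(\mathbf{x}^{\top}\mathbf{x})$. I will show that if $G$ differs from $\mathcal{U}^*_{\pi}$ then one can perform a degree-preserving perturbation that strictly increases the spectral radius, contradicting the choice of $G$; the same strict-inequality argument yields uniqueness.

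The engine is the degree-preserving $2$-switch: given edges $\{u,w\},\{v,z\}\in E(G)$ with $\{u,z\},\{v,w\}\notin E(G)$, replace them by $\{u,z\},\{v,w\}$. Every vertex keeps its degree, so the diagonal ($\alpha D$) part of the Rayleigh quotient is unchanged, and writing $\mathbf{x}^{\top}A_\alpha(G)\mathbf{x}=\sum_{\{i,j\}\in E}[\alpha(x_i^2+x_j^2)+2(1-\alpha)x_ix_j]$, the net change is
$$\Delta = 2(1-\alpha)(x_u-x_v)(x_z-x_w).$$
Hence whenever $x_u\ge x_v$ and $x_z\ge x_w$ the switch does not decrease $\rho$, and it strictly increases $\rho$ when $x_u>x_v$ and $x_z>x_w$ provided the switched graph remains in $\mathcal{G}_{\pi}$. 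This is precisely the perturbation tool of Section 2 specialized to unicyclic graphs.

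Using this I would extract the structure of $G$ in three stages. First, I would order the vertices by Perron weight and check that this order is compatible with the degree order (a vertex of larger degree cannot carry a smaller $x$-value), using the eigenvalue equation $\rho x_v=\alpha d(v)x_v+(1-\alpha)\sum_{u\sim v}x_u$ together with switches to break ties. Second, I would show the unique cycle must be a triangle carried by the three highest-weight (equivalently highest-degree) vertices: if the cycle were longer, or missed a top vertex, an admissible switch along the cycle shortens or relocates it and raises $\rho$. Third, with the cycle fixed as this triangle, the pendant parts hanging at the cycle vertices are trees, and I would invoke the greedy (BFS) description from Theorem~\ref{Theorem3.3} branch by branch to conclude that the only switch-stable configuration is $\mathcal{U}^*_{\pi}$.

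The main obstacle will be admissibility rather than the inequality $\Delta>0$: an arbitrary $2$-switch can disconnect $G$ or change the number of independent cycles, so each switch must be chosen so that connectivity and the single-cycle property survive. I expect to handle this by performing switches along the path from a vertex toward the cycle in the BFS layering, and by treating separately the cases in which one or both manipulated edges lie on the cycle. A secondary technical point is the tie-breaking when several vertices share a degree, where the equality $x_u=x_v$ forces $\Delta=0$ and one must appeal to the eigenvalue equation (or to an auxiliary relabelling) to pin down the extremal graph up to isomorphism.
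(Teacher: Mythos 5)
Your switch engine is exactly the paper's Lemma~\ref{Lemma2.3}, and for the case $d_1\ge 3$ your three stages run parallel to the paper's Lemma~\ref{Lemma4.6} (BFS-ordering from Theorem~\ref{Theorem2.7}, then forcing the cycle to be the triangle $v_{01}v_{11}v_{12}$). The genuine gap is that the theorem also covers degree sequences with $d_1=2$, a case your proposal never mentions and your toolkit does not handle. When $d_0\ge 3$ and $d_1=2$, the extremal graph $\mathcal{U}^*_{\pi}$ is a triangle with $d_0-2$ pendant paths of \emph{almost equal} lengths attached at one vertex. Here every cycle vertex other than $v_0$, and every path vertex, has degree $2$, so the degree/weight ordering of your Stage 1 gives no leverage, and the assertion to be proved is not where a triangle sits among high-degree vertices but that the cycle has length $3$ and the pendant paths are balanced. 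Degree-preserving $2$-switches do not obviously deliver this: the natural switch on the two terminal edges of an unbalanced pair of paths either yields an isomorphic graph or disconnects the two leaves while creating a second cycle; the switches that genuinely swap path tails (turning $G(k,s)$ into $G(k-1,s+1)$) or shorten the cycle do exist, but deciding the sign of your $\Delta$ for them requires comparing Perron entries at given depths on a longer versus a shorter pendant path, or on the cycle versus a path --- and that comparison is precisely the content of Lemma~\ref{Lemma4.3} and Lemma~\ref{Lemma2.10}, which the paper proves or imports by non-switch arguments (subdivision with an auxiliary test vector and Lemma~\ref{Lemma2.8}). None of your three stages supplies those comparisons. (The subcase $d_0=2$ is trivial, since $C_n$ is the only realization.)

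Even in the case $d_1\ge 3$, the two steps you defer are where the work lies. First, ties are not a ``secondary technical point'': your Stage 2 claim that the cycle is the triangle on the three top-weight vertices is the theorem in this case, and when the relevant Perron entries are equal the switch gives $\Delta=0$ and no contradiction. The paper must prove, in Cases 3 and 4 of Lemma~\ref{Lemma4.6}, a claim of the form ``$x_{v_{11}}>x_{v_{2l}}$ or $x_{v_{1k}}>x_{v_{2t}}$'', and does so by a computation with the eigen-equations combined with the lower bound of Lemma~\ref{Lemma4.5} and the regularity statement of Lemma~\ref{Lemma2.9}; without an argument of this kind, extremality only pins the graph down ``up to ties''. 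Second, Stage 3 cannot be ``Theorem~\ref{Theorem3.3} branch by branch'': unlike the tree case (Lemma~\ref{Lemma3.1}), a BFS-ordering does not determine a unicyclic graph with given degree sequence up to isomorphism. For instance, for $\pi=(3,3,2,2,1,1)$, both $\mathcal{U}^*_{\pi}$ (triangle containing the two degree-$3$ vertices) and the graph whose triangle consists of the root and two degree-$2$ neighbours, with both leaves attached to the second degree-$3$ vertex, admit BFS-orderings. Moreover the trees hanging off the cycle do not come with individually prescribed degree sequences, so the tree theorem cannot be applied to them separately. Locating the cycle correctly is therefore not a corollary of the BFS structure; it is exactly what the paper's five-case analysis in Lemma~\ref{Lemma4.6} establishes, and your sketch asserts it rather than proves it.
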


\section{Basic tools}
\begin{lemma}\label{Lemma2.1}\cite{V. Nikiforov5,Xue Jie1} Let $G$ be a connected graph with $\alpha\in[0,1)$. For $u,v\in V(G)$, suppose $N\subseteq N(v)\setminus (N(u)\cup \{u\})$. Let $G'=G-\{vw:w\in N\}+\{uw:w\in N\}$. Let $X$ be a unit eigenvector of $A_{\alpha}(G)$ corresponding to $\rho(A_{\alpha}(G))$. If $N\neq \emptyset$ and $x_u\geq x_v$, then $\rho(A_{\alpha}(G'))>\rho(A_{\alpha}(G))$.
\end{lemma}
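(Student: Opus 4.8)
The plan is to prove this perturbation lemma, Lemma~\ref{Lemma2.1}, by exploiting the Rayleigh-quotient characterization of the $A_\alpha$-spectral radius together with the Perron--Frobenius theory for nonnegative irreducible matrices. Since $G$ is connected and $\alpha\in[0,1)$, the matrix $A_\alpha(G)$ is nonnegative and irreducible (the off-diagonal entries are $(1-\alpha)A(G)$, which inherit the connectivity), so $\rho(A_\alpha(G))$ is a simple eigenvalue with a positive unit eigenvector $X=(x_w)_{w\in V(G)}$; this is the $X$ in the statement. The first step I would record is the variational identity
\begin{equation*}
\rho(A_\alpha(G))=X^{\mathsf T}A_\alpha(G)X=\alpha\sum_{w\in V(G)}d_G(w)\,x_w^2+2(1-\alpha)\sum_{\{w,z\}\in E(G)}x_w x_z.
\end{equation*}

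Next I would compare $X^{\mathsf T}A_\alpha(G')X$ with $X^{\mathsf T}A_\alpha(G)X$ directly, keeping the \emph{same} vector $X$. The edge rewiring $G'=G-\{vw:w\in N\}+\{uw:w\in N\}$ moves the edges from $v$ to $u$, so for each $w\in N$ the bilinear term changes from $x_v x_w$ to $x_u x_w$, while the degrees change only at $u$ and $v$: we have $d_{G'}(u)=d_G(u)+|N|$ and $d_{G'}(v)=d_G(v)-|N|$, and every other degree is preserved. Collecting these contributions gives
\begin{equation*}
X^{\mathsf T}A_\alpha(G')X-X^{\mathsf T}A_\alpha(G)X=\alpha\,|N|\,(x_u^2-x_v^2)+2(1-\alpha)\sum_{w\in N}(x_u-x_v)x_w,
\end{equation*}
which factors as $(x_u-x_v)\bigl[\alpha|N|(x_u+x_v)+2(1-\alpha)\sum_{w\in N}x_w\bigr]$.

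Now I would argue that this difference is strictly positive. Since $X$ is a positive vector and $N\neq\emptyset$, the bracket is strictly positive as long as $\alpha\in[0,1)$: the second summand $2(1-\alpha)\sum_{w\in N}x_w>0$ because $1-\alpha>0$, each $x_w>0$, and $N$ is nonempty, while the first summand $\alpha|N|(x_u+x_v)\ge 0$. The hypothesis $x_u\ge x_v$ makes the leading factor $(x_u-x_v)\ge 0$, so the whole expression is nonnegative; to upgrade to strict positivity I note the bracket is strictly positive and argue $x_u>x_v$ cannot fail trivially, but more robustly I would invoke the Rayleigh bound: by the variational characterization $\rho(A_\alpha(G'))\ge X^{\mathsf T}A_\alpha(G')X$ (since $G'$ is still connected, $A_\alpha(G')$ is irreducible nonnegative and its spectral radius dominates the Rayleigh quotient of any unit vector). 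Combining the bound with the strictly positive difference yields $\rho(A_\alpha(G'))\ge X^{\mathsf T}A_\alpha(G')X>X^{\mathsf T}A_\alpha(G)X=\rho(A_\alpha(G))$, which is the claim.

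The main obstacle is the strictness when $x_u=x_v$: then the factor $(x_u-x_v)$ vanishes and the naive Rayleigh comparison only gives $\rho(A_\alpha(G'))\ge \rho(A_\alpha(G))$. To close this gap I would use the fact that equality in the Rayleigh bound $\rho(A_\alpha(G'))=X^{\mathsf T}A_\alpha(G')X$ forces $X$ to be the Perron eigenvector of $A_\alpha(G')$ as well; I would then apply the eigenvalue equations for both $A_\alpha(G)$ and $A_\alpha(G')$ at the vertex $u$ (or $v$) and derive a contradiction, since the changed neighborhoods force $\rho(A_\alpha(G))x_u=\rho(A_\alpha(G'))x_u$ to be inconsistent with the added positive terms $(1-\alpha)\sum_{w\in N}x_w>0$. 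This standard eigenvector-equation argument, rather than the pure Rayleigh inequality, is what delivers the strict inequality in all cases and is the step I expect to require the most care.
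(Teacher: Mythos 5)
The paper never proves Lemma~\ref{Lemma2.1}: it is imported wholesale from \cite{V. Nikiforov5,Xue Jie1}, so there is no internal proof to compare against. Your argument is the standard one used in those references and it is correct: fix the Perron vector $X$ of $A_\alpha(G)$, compute
$X^{\mathsf T}\bigl(A_\alpha(G')-A_\alpha(G)\bigr)X=(x_u-x_v)\bigl[\alpha|N|(x_u+x_v)+2(1-\alpha)\sum_{w\in N}x_w\bigr]$,
conclude $\rho(A_\alpha(G'))\ge\rho(A_\alpha(G))$ from the Rayleigh bound, and in the degenerate case $x_u=x_v$ use the fact that equality forces $X$ to be an eigenvector of $A_\alpha(G')$, after which subtracting the two eigenvalue equations at $u$ yields $0=\alpha|N|x_u+(1-\alpha)\sum_{w\in N}x_w>0$, a contradiction. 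The degree bookkeeping ($d_{G'}(u)=d_G(u)+|N|$, $d_{G'}(v)=d_G(v)-|N|$, all other degrees unchanged) is also right, and it relies correctly on $N\cap N(u)=\emptyset$ and $u,v\notin N$.

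One inaccuracy is worth fixing: you assert that $G'$ ``is still connected'' and lean on irreducibility of $A_\alpha(G')$. That claim is false in general. For example, if $G$ is the path on $u,a,w,v$ (edges $ua,aw,wv$) and $N=\{w\}$, then the hypotheses of the lemma hold (by symmetry $x_u=x_v$), yet $G'$ is a triangle on $\{u,a,w\}$ plus the isolated vertex $v$. Fortunately your proof does not need connectivity of $G'$: for any symmetric nonnegative matrix $M$ the spectral radius equals the largest eigenvalue, so $\rho(M)\ge X^{\mathsf T}MX$ for every unit vector $X$, and equality forces $MX=\rho(M)X$; both facts hold without irreducibility. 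The only place strict positivity of $X$ is used---to make $\sum_{w\in N}x_w>0$ in the bracket and in the final contradiction---comes from irreducibility of $A_\alpha(G)$ itself, which follows from connectivity of $G$ and $\alpha<1$. With that justification repaired, the proof is complete.
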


\begin{lemma}\label{Lemma2.2} Let $G\in\mathcal{G}_{\pi}$ be a connected graph with $\alpha\in[0,1)$. Let $d_G(u)-d_G(v)=k>0$ and $X$ be a unit eigenvector of $A_{\alpha}(G)$ corresponding to $\rho(A_{\alpha}(G))$. If $x_v\geq x_u$, then there exists a connected graph $G'\in\mathcal{G}_{\pi}$ such that $\rho(A_{\alpha}(G'))>\rho(A_{\alpha}(G))$.
\end{lemma}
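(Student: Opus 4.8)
The plan is to pass from $G$ to a graph $G'$ obtained by transferring exactly $k=d_G(u)-d_G(v)$ neighbours from the larger-degree vertex $u$ to the smaller-degree vertex $v$. This swaps the degrees of $u$ and $v$ and leaves every other degree fixed, so $G'$ keeps the sequence $\pi$, while the strict increase of the spectral radius comes for free from Lemma~\ref{Lemma2.1}.

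Concretely, set $N_1=N_G(u)\setminus(N_G(v)\cup\{v\})$, $N_2=N_G(v)\setminus(N_G(u)\cup\{u\})$ and $C=N_G(u)\cap N_G(v)$. Counting neighbourhoods gives $d_G(u)=|C|+|N_1|+\mathbf{1}_{uv\in E(G)}$ and the analogous formula for $v$, so $|N_1|=|N_2|+k\ge k\ge 1$. Choose $W\subseteq N_1$ with $|W|=k$ and put $G'=G-\{uw:w\in W\}+\{vw:w\in W\}$. Since $W\cap N_G(v)=\emptyset$ and $v\notin W$, the graph $G'$ is simple; moreover $d_{G'}(u)=d_G(u)-k=d_G(v)$, $d_{G'}(v)=d_G(v)+k=d_G(u)$, and all other degrees are unchanged, so the degree sequence of $G'$ is again $\pi$. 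Applying Lemma~\ref{Lemma2.1} with the names of $u$ and $v$ interchanged and with $N:=W$ — whose hypotheses $W\subseteq N_G(u)\setminus(N_G(v)\cup\{v\})$, $W\ne\emptyset$ and $x_v\ge x_u$ all hold by construction — yields $\rho(A_\alpha(G'))>\rho(A_\alpha(G))$.

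The one thing Lemma~\ref{Lemma2.1} does not provide, and which I expect to be the main obstacle, is that $G'$ be \emph{connected}, as required for membership in $\mathcal{G}_\pi$; here the freedom in choosing $W$ is what I would exploit. The key is a rerouting principle: each deleted edge $\{u,w\}$ can be bypassed as soon as $u$ reaches $v$ along a path that survives in $G'$, since the edge $\{v,w\}$ is then present. Thus if $uv\in E(G)$, or if $u$ and $v$ have a common neighbour $c\in C$ (note $c\notin W$, so $\{u,c\},\{c,v\}$ are retained), the walks $u\!-\!v$ or $u\!-\!c\!-\!v$ survive and $G'$ is connected for \emph{any} admissible $W$. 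The only remaining configuration is $uv\notin E(G)$ with $C=\emptyset$; then $N_G(u)=N_1$ and $d_G(v)=|N_2|\ge 1$, so $|N_1|=|N_2|+k\ge k+1$. In this case I would fix a spanning tree $T$ of $G$, let $w^{\ast}$ be the neighbour of $u$ on the path from $u$ to $v$ in $T$, and choose $W\subseteq N_1\setminus\{w^{\ast}\}$ (possible since $|N_1|\ge k+1$). Retaining $\{u,w^{\ast}\}$ preserves the whole path from $u$ to $v$, so $v$ stays in the component of $u$ after the deletions; and for each moved $w\in W$ the subtree of $T$ detached by removing $\{u,w\}$ reattaches to that component through the new edge $\{v,w\}$. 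Hence a connected spanning subgraph of $G'$ survives. Assembling the three cases shows $G'\in\mathcal{G}_\pi$, which together with $\rho(A_\alpha(G'))>\rho(A_\alpha(G))$ completes the proof.
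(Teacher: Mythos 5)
Your proposal is correct, and its core is exactly the paper's reduction: apply Lemma~\ref{Lemma2.1} with the roles of $u$ and $v$ interchanged, transferring neighbours from the high-degree vertex $u$ to the vertex $v$ that carries the larger eigenvector entry. The difference is completeness rather than route. The paper's entire proof is two lines: since $d_G(u)>d_G(v)$, the set $N(u)\setminus(N(v)\cup\{v\})$ is nonempty, and Lemma~\ref{Lemma2.1} gives the strict increase. It never verifies that the new graph actually lies in $\mathcal{G}_{\pi}$ --- neither that one can (and must) move exactly $k$ neighbours so that the degrees of $u$ and $v$ swap and $\pi$ is preserved, nor that the resulting graph is connected. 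Your count $|N_1|=|N_2|+k\ge k$, the choice $|W|=k$, and your connectedness analysis (automatic rerouting when $uv\in E(G)$ or $N(u)\cap N(v)\neq\emptyset$; the spanning-tree argument with the reserved neighbour $w^{\ast}$, using $|N_1|\ge k+1$, in the remaining case) are precisely the details the paper suppresses, and all of them check out. So your write-up is the same argument carried out rigorously; the paper's brevity rests on the unstated claims that your version actually proves.
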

\begin{proof}From $d_G(u)-d_G(v)=k>0$, we know that $d_G(u)>d_G(v)$, then $N(u)\setminus (N(v)\cup \{v\})\neq \emptyset$. Combining with $x_v\geq x_u$, the assertion holds by Lemma \ref{Lemma2.1}.
\end{proof}

\begin{lemma}\label{Lemma2.3} Let $G\in\mathcal{G}_{\pi}$ be a connected graph with $\alpha\in[0,1)$. Let $X$ be a unit eigenvector of $A_{\alpha}(G)$ corresponding to $\rho(A_{\alpha}(G))$. Assume that $v_1u_1, v_2u_2\in E(G)$ and $v_1 v_2,u_1 u_2\notin E(G)$. Let $G'$ be a new graph obtained from $G$ by deleting edges $v_1u_1, v_2u_2$ and adding edges $v_1 v_2,u_1 u_2$. If $x_{v_1}\geq x_{u_2}$ and $x_{v_2}\geq x_{u_1}$, then $\rho(A_{\alpha}(G'))\geq\rho(A_{\alpha}(G))$. Furthermore, if one of the two inequalities is strict, then $\rho(A_{\alpha}(G'))>\rho(A_{\alpha}(G))$.
\end{lemma}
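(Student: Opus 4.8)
The plan is to exploit the variational (Rayleigh-quotient) characterization of the spectral radius together with the explicit quadratic form of $A_{\alpha}$. Since $A_{\alpha}(G')$ is a nonnegative symmetric matrix, the Perron--Frobenius theorem gives $\rho(A_{\alpha}(G'))=\lambda_{1}(A_{\alpha}(G'))=\max_{\|Y\|=1}Y^{T}A_{\alpha}(G')Y$, so it suffices to test the quadratic form against the given unit eigenvector $X$ of $G$. The decisive structural observation is that the edge swap preserves every vertex degree: $v_1$ trades its neighbor $u_1$ for $v_2$, $u_1$ trades $v_1$ for $u_2$, and similarly for $v_2,u_2$, so $d_{G'}=d_{G}$ at every vertex and in particular $G'\in\mathcal{G}_{\pi}$ as well. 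Writing
$$X^{T}A_{\alpha}(H)X=\alpha\sum_{v\in V(H)}d_{H}(v)x_{v}^{2}+2(1-\alpha)\sum_{uv\in E(H)}x_{u}x_{v},$$
the diagonal term is therefore identical for $H=G$ and $H=G'$, so only the edge sum can change.

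First I would compute the difference of the edge sums. The two graphs differ exactly in the edges $\{v_1u_1,v_2u_2\}$ (present in $G$) versus $\{v_1v_2,u_1u_2\}$ (present in $G'$), hence
$$X^{T}A_{\alpha}(G')X-X^{T}A_{\alpha}(G)X=2(1-\alpha)\big(x_{v_1}x_{v_2}+x_{u_1}x_{u_2}-x_{v_1}x_{u_1}-x_{v_2}x_{u_2}\big).$$
The key algebraic step is the factorization of the bracket as $(x_{v_1}-x_{u_2})(x_{v_2}-x_{u_1})$. Under the hypotheses $x_{v_1}\geq x_{u_2}$ and $x_{v_2}\geq x_{u_1}$ both factors are nonnegative, so the bracket is $\geq 0$, and therefore $X^{T}A_{\alpha}(G')X\geq X^{T}A_{\alpha}(G)X=\rho(A_{\alpha}(G))$. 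Combining this with the Rayleigh bound $\rho(A_{\alpha}(G'))\geq X^{T}A_{\alpha}(G')X$ yields $\rho(A_{\alpha}(G'))\geq\rho(A_{\alpha}(G))$, the first assertion.

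For the strict inequality I would argue by contradiction, and this is the step I expect to be the main obstacle, since a single strict hypothesis need not make the product $(x_{v_1}-x_{u_2})(x_{v_2}-x_{u_1})$ strictly positive. Suppose $\rho(A_{\alpha}(G'))=\rho(A_{\alpha}(G))=:\rho$. Then the chain of inequalities above collapses to equalities, so $X^{T}A_{\alpha}(G')X=\rho=\lambda_{1}(A_{\alpha}(G'))$ with $\|X\|=1$; as $A_{\alpha}(G')$ is symmetric this forces $X$ to be an eigenvector of $A_{\alpha}(G')$ for $\rho$. Now I would compare the eigenvalue equations $A_{\alpha}(G')X=\rho X$ and $A_{\alpha}(G)X=\rho X$ at one carefully chosen coordinate. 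At vertex $v_2$ the neighborhood changes only by trading $u_2$ for $v_1$, and since $d_{G'}(v_2)=d_{G}(v_2)$ the diagonal contributions cancel upon subtraction, leaving $0=(1-\alpha)(x_{v_1}-x_{u_2})$; as $\alpha<1$ this gives $x_{v_1}=x_{u_2}$. Symmetrically, reading the two equations at vertex $v_1$ gives $x_{v_2}=x_{u_1}$. Thus equality of the spectral radii forces both hypotheses to hold with equality, so if even one of $x_{v_1}\geq x_{u_2}$, $x_{v_2}\geq x_{u_1}$ is strict we reach a contradiction, proving $\rho(A_{\alpha}(G'))>\rho(A_{\alpha}(G))$.
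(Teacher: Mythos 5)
Your proof is correct and follows essentially the same route as the paper: the Rayleigh-quotient bound $\rho(A_{\alpha}(G'))\geq X^{T}A_{\alpha}(G')X$ with the eigenvector $X$ of $G$, the factorization $X^{T}(A_{\alpha}(G')-A_{\alpha}(G))X=2(1-\alpha)(x_{v_1}-x_{u_2})(x_{v_2}-x_{u_1})$ (using that the swap preserves all degrees), and, in the equality case, the observation that $X$ becomes an eigenvector of $A_{\alpha}(G')$, after which comparing the eigenvalue equations at $v_1$ and $v_2$ forces $x_{v_2}=x_{u_1}$ and $x_{v_1}=x_{u_2}$. One incidental remark: your aside that $G'\in\mathcal{G}_{\pi}$ is not fully justified, since the swap preserves the degree sequence but may disconnect the graph; fortunately nothing in your argument uses this claim.
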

\begin{proof}Let $X$ be a unit eigenvector of $A_{\alpha}(G)$ corresponding to $\rho(A_{\alpha}(G))$. Then
\begin{eqnarray*}
\rho(A_{\alpha}(G'))-\rho(A_{\alpha}(G))&\geq&X^T(A_{\alpha}(G')-A_{\alpha}(G))X\\
&=& 2(1-\alpha)(x_{v_1}-x_{u_2})(x_{v_2}-x_{u_1})\\
&\geq& 0.
\end{eqnarray*}
If $\rho(A_{\alpha}(G'))=\rho(A_{\alpha}(G))$, then $X$ is also a unit eigenvector of $A_{\alpha}(G')$ corresponding to $\rho(A_{\alpha}(G'))$. And
$$(\rho(A_{\alpha}(G'))-\rho(A_{\alpha}(G)))x_{v_1}=(1-\alpha)(x_{v_2}-x_{u_1})=0,$$
thus $x_{v_2}=x_{u_1}$. Similarly, we get $x_{v_1}=x_{u_2}$.
\end{proof}

\begin{lemma}\label{Lemma2.4} Let $G\in\mathcal{G}_{\pi}$ be a connected graph with $\alpha\in[0,1)$. Let $X$ be a unit eigenvector of $A_{\alpha}(G)$ corresponding to $\rho(A_{\alpha}(G))$. If there exist three vertices $u,v,w\in V(G)$ such that $uv\in E(G)$, $uw\notin E(G)$ and $x_v<x_w\leq x_u$, $x_u\geq x_t$ for any vertex $t\in N(w)$. Then there exists a connected graph $G'\in\mathcal{G}_{\pi}$ such that $\rho(A_{\alpha}(G'))>\rho(A_{\alpha}(G))$.
\end{lemma}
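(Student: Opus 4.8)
The plan is to exhibit $G'$ as the result of a degree-preserving double edge swap (a $2$-switch) applied to $G$, and then to invoke Lemma \ref{Lemma2.3} to obtain the strict increase of the spectral radius. Concretely, I would choose a neighbour $t\in N(w)$ with $t\neq v$ and $vt\notin E(G)$, delete the edges $uv$ and $wt$, and add the edges $uw$ and $vt$. Every vertex keeps its degree under this operation (the vertex $u$ trades $v$ for $w$, the vertex $v$ trades $u$ for $t$, the vertex $w$ trades $t$ for $u$, and $t$ trades $w$ for $v$), so the resulting graph still has degree sequence $\pi$.

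To see that the swap raises the spectral radius, I would match it to Lemma \ref{Lemma2.3} via the identification $v_1=u$, $u_1=v$, $v_2=w$, $u_2=t$: the deleted pair $\{v_1u_1,v_2u_2\}=\{uv,wt\}$ and the added pair $\{v_1v_2,u_1u_2\}=\{uw,vt\}$ are exactly those of that lemma. Its two hypotheses then read $x_{v_1}\geq x_{u_2}$, i.e.\ $x_u\geq x_t$, which holds because $t\in N(w)$ and $x_u\geq x_s$ for every $s\in N(w)$; and $x_{v_2}\geq x_{u_1}$, i.e.\ $x_w\geq x_v$, which holds by the assumption $x_v<x_w$. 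Since the second inequality is strict, Lemma \ref{Lemma2.3} yields $\rho(A_\alpha(G'))>\rho(A_\alpha(G))$.

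The first point that needs justification is that a legal $t$ exists, that is, $N(w)\setminus(N(v)\cup\{v\})\neq\emptyset$. I would argue this by contradiction. Assume $N(w)\subseteq N(v)\cup\{v\}$. Because $uv\in E(G)$ but $uw\notin E(G)$, the vertex $u$ lies in $N(v)$ but not in $N(w)$, so $N(w)\subseteq(N(v)\setminus\{u\})\cup\{v\}$ and hence $d(w)\leq d(v)$. Writing the eigen-equations $(\rho-\alpha d(v))x_v=(1-\alpha)\sum_{s\in N(v)}x_s$ and $(\rho-\alpha d(w))x_w=(1-\alpha)\sum_{s\in N(w)}x_s$ (both coefficients being positive since $X>0$ on the connected graph $G$), the inclusion bounds the $w$-sum by $\sum_{s\in N(v)}x_s$ with the term $x_u$ replaced by $x_v$; using $d(w)\leq d(v)$ to compare denominators, this gives $x_w\leq x_v\cdot\frac{x_v+\Sigma}{x_u+\Sigma}$ with $\Sigma=\sum_{s\in N(v)\setminus\{u\}}x_s\ge 0$. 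As $x_u>x_v$ (which follows from $x_v<x_w\leq x_u$), the ratio is strictly less than $1$, forcing $x_w<x_v$ and contradicting $x_v<x_w$. Thus a legal $t$ is always available.

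The main obstacle is to guarantee that $G'$ is \emph{connected}, since a $2$-switch on a connected graph need not preserve connectivity. A short analysis shows that $G'$ can be disconnected only when $\{uv,wt\}$ is an edge cut of $G$ separating $\{u,w\}$ from $\{v,t\}$, for then both new edges $uw$ and $vt$ lie inside the two sides and fail to bridge them. I expect the crux of the proof to be ruling out this configuration: I would show that among the legal choices of $t$ one can always select a $t$ for which $\{uv,wt\}$ is not such a separating cut (if the first choice produces the bad configuration, either another neighbour of $w$ outside $N(v)\cup\{v\}$, or a re-routing of paths through the part of $G$ untouched by the swap, restores connectivity). This yields a connected $G'\in\mathcal{G}_\pi$ with $\rho(A_\alpha(G'))>\rho(A_\alpha(G))$, as required.
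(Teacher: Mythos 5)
Your setup is sound: the swap you propose (delete $uv,wt$, add $uw,vt$ for a vertex $t\in N(w)\setminus(N(v)\cup\{v\})$) is exactly the move the paper uses, your identification with Lemma \ref{Lemma2.3} is correct, and your contradiction argument for the existence of a legal $t$ is essentially the paper's first claim (the paper compares degrees via the eigenequations; your ratio computation is an equivalent variant). Your characterization of when the swap disconnects --- precisely when $\{uv,wt\}$ is a cut separating $\{u,w\}$ from $\{v,t\}$ --- is also correct. But the connectivity step, which you yourself call the crux, is left as a declaration of intent (``I expect the crux of the proof to be ruling out this configuration''), and this is a genuine gap: it is exactly the part of the lemma that requires work, and your fallback plan as stated does not close it.

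Two things are missing. First, you never establish $|N(w)|\geq 2$, which is the paper's second claim and is proved from the eigenequations using $x_w>x_v$ and $x_u\geq x_t$ for $t\in N(w)$. Without it, your strategy can fail outright: if $N(w)=\{t\}$ and $\{uv,wt\}$ happens to be a separating cut of the required form, there is no ``another neighbour of $w$'' to switch to, and no ``re-routing of paths'' can help, because disconnection is a property of $G'$ itself, not of how one argues about it. Second, even granting $|N(w)|\geq2$, the claim that some legal neighbour avoids the bad cut needs a proof. One can in fact give one (if a legal $t_1$ yields a separating cut, then every neighbour of $w$ is legal, and two distinct neighbours both yielding such cuts would force every $w$--$u$ path to cross two edge-disjoint cuts it cannot cross, contradicting connectedness of $G$), but nothing of this sort appears in your proposal. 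The paper avoids this abstract argument altogether by a constructive choice: it takes a $u$--$w$ path $P_{uw}$, lets $s$ be the neighbour of $w$ on it, and splits into cases according to whether $uv\in E(P_{uw})$ and whether $vs\in E(G)$, in each case deleting an edge at $w$ so that the surviving piece of $P_{uw}$ (or, in one subcase, an edge from $N(w)$ back to $v$) certifies connectivity of $G'$. Your proposal would need either that case analysis or the two-cuts contradiction, plus the $|N(w)|\geq2$ claim, to become a proof.
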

\begin{proof}Let $X$ be a unit eigenvector of $A_{\alpha}(G)$ corresponding to $\rho(A_{\alpha}(G))$. Then the following claims must hold.

\noindent{\bf Claim.} There exists a vertex $t\in N(w)$ such that $tv\notin E(G)$ and $t\neq v$

Otherwise, suppose that $N(w)\setminus \{v\}\subset N(v)$, then $N(w)\setminus \{v\}\subset N(v)\setminus \{u\}$ since $uw\notin E(G)$, and then $d_G(w)<d_G(v)$. By $A_{\alpha}(G)X=\rho(A_{\alpha}(G))X$, we obtain that
$$\rho(A_{\alpha}(G))x_w=\alpha d_G(w)x_w+(1-\alpha)\sum\limits_{j\in N(w)}x_j,$$
$$\rho(A_{\alpha}(G))x_v=\alpha d_G(v)x_v+(1-\alpha)\sum\limits_{j\in N(v)}x_j,$$
then
\begin{eqnarray*}
(\rho(A_{\alpha}(G))-\alpha d_G(w))x_w&=&(1-\alpha)\sum\limits_{j\in N(w)}x_j\leq (1-\alpha)(x_v+\sum\limits_{j\in N(w)\backslash \{v\}}x_j)\\
&<& (1-\alpha)(x_v+\sum\limits_{j\in N(v)\backslash \{u\}}x_j)\\
&<& (1-\alpha)(x_u+\sum\limits_{j\in N(v)\backslash \{u\}}x_j)\\
&=& (1-\alpha)\sum\limits_{j\in N(v)}x_j\\
&=& (\rho(A_{\alpha}(G))-\alpha d_G(v))x_v.
\end{eqnarray*}
Since $x_v<x_w$, $\rho(A_{\alpha}(G))-\alpha d_G(w)<\rho(A_{\alpha}(G))-\alpha d_G(v)$, i.e., $d_G(w)>d_G(v)$, a contradiction.

\noindent{\bf Claim.} $|N(w)|\geq2$.

We try to prove the claim by contradiction. Note that $|N(w)|\geq1$ by Claim 1. So suppose that $N(w)=\{p\}$ and $pv\notin E(G)$, $p\neq v$,
then
$$\rho(A_{\alpha}(G))x_w=\alpha x_w+(1-\alpha)x_p,$$
$$\rho(A_{\alpha}(G))x_v=\alpha d_G(v)x_v+(1-\alpha)\sum\limits_{j\in N(v)}x_j,$$
since $x_w>x_v$,
$$(\rho(A_{\alpha}(G))-\alpha)x_w>(\rho(A_{\alpha}(G))-\alpha)x_v\geq(\rho(A_{\alpha}(G))-\alpha d_G(v))x_v,$$
that is,
$$(1-\alpha)x_p>(1-\alpha)x_u+(1-\alpha)\sum\limits_{j\in N(v)\setminus \{u\}}x_j\geq(1-\alpha)x_u,$$
thus $x_p>x_u$, a contradiction.

\noindent{\bf Claim.} There exists a connected graph $G'\in\mathcal{G}_{\pi}$ such that $\rho(A_{\alpha}(G'))>\rho(A_{\alpha}(G))$.

Since $G$ is connected and $uw\notin E(G)$, there exist a path $P_{uw}$ between $u$ and $w$. Let $sw\in E(P_{uw})$.

\noindent{\bf Case 1.} $uv\notin E(P_{uw})$.

\noindent{\bf Subcase 1.1.} $vs\notin E(G)$.
For the edges $uv, ws\in E(G)$, combining with $x_u\geq x_s$ and $x_w>x_v$, then we can construct a new connected graph $G'\in\mathcal{G}_{\pi}$ from $G$ by deleting edges $uv, ws$ and adding edges $uw,vs$ such that $\rho(A_{\alpha}(G'))>\rho(A_{\alpha}(G))$ by Lemma \ref{Lemma2.3}.

\noindent{\bf Subcase 1.2.} $vs\in E(G)$.
By Claim 1, there exists a vertex $p\in N(w)$ such that $pv\notin E(G)$ and $p\neq v$. Now we consider the edges $uv, wp\in E(G)$, combining with $x_u\geq x_p$ and $x_w>x_v$, then we can construct a new connected graph $G'\in\mathcal{G}_{\pi}$ from $G$ by deleting edges $uv, wp$ and adding edges $uw,vp$ such that $\rho(A_{\alpha}(G'))>\rho(A_{\alpha}(G))$ by Lemma \ref{Lemma2.3}.

\noindent{\bf Case 2.} $uv\in E(P_{uw})$.

\noindent{\bf Subcase 2.1.} $vs\notin E(G)$ and $vt\in E(G)$ for any vertex $t\in N(w)\backslash \{s\}$.
Then we consider the edges $uv, ws\in E(G)$, combining with $x_u\geq x_s$ and $x_w>x_v$, then we can construct a new connected graph $G'\in\mathcal{G}_{\pi}$ from $G$ by deleting edges $uv, ws$ and adding edges $uw,vs$ such that $\rho(A_{\alpha}(G'))>\rho(A_{\alpha}(G))$ by Lemma \ref{Lemma2.3}.

\noindent{\bf Subcase 2.2.} There exists a vertex $p\in N(w)\backslash \{s\}$ such that $pv\notin E(G)$.
Now we consider the edges $uv, wp\in E(G)$, combining with $x_u\geq x_p$ and $x_w>x_v$, then we can construct a new connected graph $G'\in\mathcal{G}_{\pi}$ from $G$ by deleting edges $uv, wp$ and adding edges $uw,vp$ such that $\rho(A_{\alpha}(G'))>\rho(A_{\alpha}(G))$ by Lemma \ref{Lemma2.3}.
\end{proof}

\begin{lemma}\label{Lemma2.5} Let $G\in\mathcal{G}_{\pi}$ be a connected graph with $\alpha\in[0,1)$ and $V(G)=\{v_0,v_1,\ldots,v_{n-1}\}$. Let $\rho(A_{\alpha}(G))= \max\{\rho(A_{\alpha}(H))|H\in \mathcal{G}_{\pi}\}$ and $X$ be a unit eigenvector of $A_{\alpha}(G)$ corresponding to $\rho(A_{\alpha}(G))$. Then the following assertions hold.
\begin{description}
  \item[(1)] If $x_{v_i}\geq x_{v_j}$, then $d_G(v_i)\geq d_G(v_j)$ for $i<j$;
  \item[(2)] If $x_{v_i}= x_{v_j}$, then $d_G(v_i)= d_G(v_j)$.
\end{description}
\end{lemma}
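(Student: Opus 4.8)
The plan is to prove both parts by contradiction, leveraging the hypothesis that $G$ attains the maximum $A_\alpha$-spectral radius over the entire class $\mathcal{G}_{\pi}$, with Lemma \ref{Lemma2.2} as the single driving tool. The content of Lemma \ref{Lemma2.2} is precisely that whenever the extremal graph contains a pair of vertices whose degree order is opposite to their eigenvector order, one can strictly increase the spectral radius while staying inside $\mathcal{G}_{\pi}$ --- which the maximality of $G$ forbids. Note also that Lemma \ref{Lemma2.2} is fed the \emph{same} unit eigenvector $X$ of $A_\alpha(G)$ that we already have in hand, so no eigenvector needs to be re-derived.

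For part (1), I would fix $i<j$ with $x_{v_i}\geq x_{v_j}$ and suppose, for contradiction, that $d_G(v_i)<d_G(v_j)$. Set $u=v_j$ and $v=v_i$. Then $d_G(u)-d_G(v)=d_G(v_j)-d_G(v_i)=k>0$, while the eigenvector inequality reads $x_v=x_{v_i}\geq x_{v_j}=x_u$. These are exactly the hypotheses of Lemma \ref{Lemma2.2}, so there exists a connected $G'\in\mathcal{G}_{\pi}$ with $\rho(A_\alpha(G'))>\rho(A_\alpha(G))$. This contradicts $\rho(A_\alpha(G))=\max\{\rho(A_\alpha(H))\mid H\in\mathcal{G}_{\pi}\}$, and hence $d_G(v_i)\geq d_G(v_j)$.

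For part (2), I would caution that one cannot simply invoke part (1) in both directions, since part (1) is stated only for $i<j$ and thus supplies just one of the two needed degree inequalities. Instead I would rerun the same contradiction directly: given $x_{v_i}=x_{v_j}$, suppose $d_G(v_i)\neq d_G(v_j)$ and relabel so that $d_G(v_j)>d_G(v_i)$. Taking $u=v_j$ and $v=v_i$ again gives $d_G(u)-d_G(v)>0$, and now the equality $x_{v_i}=x_{v_j}$ furnishes $x_v\geq x_u$ (with equality). Lemma \ref{Lemma2.2} once more produces a strictly better $G'\in\mathcal{G}_{\pi}$, contradicting maximality; therefore $d_G(v_i)=d_G(v_j)$.

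I expect the only delicate points to be bookkeeping rather than mathematics: correctly assigning the roles of $u$ (the higher-degree vertex) and $v$ (the vertex with the larger-or-equal eigenvector entry) so that the hypotheses $d_G(u)-d_G(v)>0$ and $x_v\geq x_u$ of Lemma \ref{Lemma2.2} hold simultaneously, and observing that the \emph{weak} inequality $x_v\geq x_u$ (not strict) already suffices to trigger Lemma \ref{Lemma2.2} --- which is exactly what makes the equality case in part (2) go through. Everything else is an immediate appeal to the extremality of $G$.
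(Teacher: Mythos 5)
Your proposal is correct and takes essentially the same approach as the paper: both parts are proved by contradiction through Lemma \ref{Lemma2.2}, using the maximality of $\rho(A_{\alpha}(G))$ over $\mathcal{G}_{\pi}$ to rule out a pair with $x_v\geq x_u$ but $d_G(u)>d_G(v)$. Your decision to rerun the argument directly for part (2), rather than citing part (1) twice, is exactly what the paper does when it says ``by the same argument as (1)'' to obtain $d_G(v_i)\leq d_G(v_j)$ and conclude equality.
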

\begin{proof}(1). If $d_G(v_i)< d_G(v_j)$ for $i<j$, combining with $x_{v_i}\geq x_{v_j}$, then there exists a connected graph $G'\in\mathcal{G}_{\pi}$ such that $\rho(A_{\alpha}(G'))>\rho(A_{\alpha}(G))$ by Lemma \ref{Lemma2.2}, a contradiction. Thus, $d_G(v_0)\geq d_G(v_1)\geq\cdots\geq d_G(v_{n-1})$.

\noindent(2). If $x_{v_i}= x_{v_j}$, then we have $d_G(v_i)\leq d_G(v_j)$ by the same argument as (1). So $d_G(v_i)= d_G(v_j)$.
\end{proof}

For a graph with a root $v_0$, we call the distance the {\it height} $h(v)=\mbox{dis}(v,v_0)$ of a vertex $v.$

\noindent{\bf Definition 1.} Let $G=(V,E)$ be a graph with root $v_0$. A well-ordering $\prec$ of the vertices is called a breadth-first-search ordering (BFS-ordering for short) if the following hold for all vertices $u,v\in V$:
\begin{description}
  \item[(1)] $u\prec v$ implies $h(u)\leq h(v)$;
  \item[(2)] $u\prec v$ implies $d_G(u)\geq d_G(v)$;
  \item[(3)] let $uv\in E(G)$, $xy\in E(G)$, $uy\notin E(G)$, $xv\notin E(G)$ with $h(u)=h(x)=h(v)-1=h(y)-1$. If $u\prec x$, then $v\prec y$.
\end{description}
We call a graph that has a BFS-ordering of its vertices a BFS-graph.

\begin{lemma}\label{Lemma2.6} Let $G\in\mathcal{G}_{\pi}$ be a connected graph with $\alpha\in[0,1)$ and $V(G)=\{v_0,v_1,\ldots,v_{n-1}\}$. Let $\rho(A_{\alpha}(G))= \max\{\rho(A_{\alpha}(H))|H\in \mathcal{G}_{\pi}\}$ and $X$ be a unit eigenvector of $A_{\alpha}(G)$ corresponding to $\rho(A_{\alpha}(G))$. Then there exists a numeration of the vertices of $G$ such that $x_{v_0}\geq x_{v_1}\geq \cdots\geq x_{v_{n-1}}$ and $h(v_0)\leq h(v_1)\leq\cdots\leq h(v_{n-1})$.
\end{lemma}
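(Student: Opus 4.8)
The plan is to combine the Perron--Frobenius theory with the perturbation lemmas of this section. First, since $G$ is connected and $\alpha\in[0,1)$, the matrix $A_{\alpha}(G)=\alpha D(G)+(1-\alpha)A(G)$ is nonnegative and irreducible (its off-diagonal support coincides with that of $A(G)$ because $1-\alpha>0$). Hence $\rho(A_{\alpha}(G))$ is a simple eigenvalue with a positive eigenvector, so I may assume $X>0$. I would fix the root $v_0$ to be a vertex of maximum Perron weight, $x_{v_0}=\max_{w}x_w$, so that $h(v_0)=0$ is the smallest height.

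With the root fixed, the whole statement reduces to a single \emph{alignment} claim:
\[
x_u>x_v \ \Longrightarrow\ h(u)\le h(v)\qquad\text{for all } u,v\in V(G).
\]
Granting this, the desired numeration is produced greedily: list the vertices in nonincreasing order of their Perron weights, breaking ties by nondecreasing height. Then $x_{v_0}\ge\cdots\ge x_{v_{n-1}}$ holds by construction, while for each consecutive pair either $x_{v_i}>x_{v_{i+1}}$, in which case the alignment gives $h(v_i)\le h(v_{i+1})$, or $x_{v_i}=x_{v_{i+1}}$, in which case the tie-break gives $h(v_i)\le h(v_{i+1})$. Thus the heights are nondecreasing along the whole list.

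To prove the alignment I would argue by contradiction against the extremality $\rho(A_{\alpha}(G))=\max_{H\in\mathcal{G}_{\pi}}\rho(A_{\alpha}(H))$. Suppose a bad pair exists, and among all vertices that can serve as the heavy, deep end of a bad pair (i.e.\ vertices $q$ with $x_q>x_p$ for some $p$ satisfying $h(p)<h(q)$) choose one, $q$, with $x_q$ maximum. Since $x_{v_0}$ is the global maximum, $q\neq v_0$, and in fact $h(q)\ge 2$ (if $h(q)=1$ then the witness $p$ would be $v_0$, forcing $x_{v_0}<x_q$, impossible); hence $v_0q\notin E(G)$. I would then invoke Lemma~\ref{Lemma2.4} with the dominating vertex taken to be the root: in the notation of that lemma put $u:=v_0$, $w:=q$, and let $v:=v'$ be a neighbour of $v_0$ with $x_{v'}<x_q$. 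All hypotheses are then automatic except the existence of $v'$, because $x_q\le x_{v_0}$ and $x_t\le x_{v_0}$ for every $t\in N(q)$ (the root has global maximum weight) and $v_0q\notin E(G)$ was just noted. Lemma~\ref{Lemma2.4} would then produce $G'\in\mathcal{G}_{\pi}$ with $\rho(A_{\alpha}(G'))>\rho(A_{\alpha}(G))$, the required contradiction.

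The main obstacle is precisely the one hypothesis that is not automatic: the existence of a root-neighbour $v'$ strictly lighter than $q$. If every neighbour of $v_0$ has weight at least $x_q$, the single rotation above does not apply, and this degenerate situation is the crux. I expect to resolve it by descending through the Perron-weight levels: replace the global maximum by the heaviest vertex that still possesses a strictly lighter neighbour while dominating $N(q)$, and re-run the rotation there, which amounts to an induction on the distinct values taken by $X$. Controlling this descent requires the degree--weight alignment of Lemma~\ref{Lemma2.5} (equal weights force equal degrees, pinning down the local structure) together with the equality clause of Lemma~\ref{Lemma2.3} (an equality $\rho(A_{\alpha}(G'))=\rho(A_{\alpha}(G))$ forces $X$ to remain a Perron vector of $G'$, propagating weight equalities). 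The careful bookkeeping of these equality and tie cases, rather than any single rotation, is where the real difficulty lies.
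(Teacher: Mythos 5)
Your reduction of the lemma to the alignment claim ($x_u>x_v \Rightarrow h(u)\le h(v)$) and your choice of a maximal-weight bad vertex $q$ are sound, but the obstacle you flag at the end is a genuine gap, and it is not a peripheral one: it is the entire substance of the lemma. Invoking Lemma~\ref{Lemma2.4} with $u=v_0$, $w=q$ requires a root-neighbour $v'$ with $x_{v'}<x_q$, and when every neighbour of $v_0$ has weight at least $x_q$ you have no argument. Your proposed repair --- descend to ``the heaviest vertex that still possesses a strictly lighter neighbour while dominating $N(q)$'' --- fails on three counts: (i) you do not show such a vertex $u$ with $x_u\ge x_q$ exists; (ii) Lemma~\ref{Lemma2.4} also requires $uq\notin E(G)$, which you secure only for the root (via $h(q)\ge 2$) and never for a replacement vertex; (iii) the domination hypothesis $x_u\ge x_t$ for all $t\in N(q)$, trivial for the root, is for any other vertex a weight-versus-position statement of exactly the same kind as the alignment you are trying to prove, and neither the equality clause of Lemma~\ref{Lemma2.3} nor Lemma~\ref{Lemma2.5} delivers it.

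The paper closes precisely this hole with two ideas your sketch lacks. First, it proves height monotonicity by induction along the weight-sorted list $x_{v_0}\ge\cdots\ge x_{v_{n-1}}$, so that at the first violating pair $(v_k,v_{k+1})$ (your $(q,p)$) it already knows $h(v_0)\le\cdots\le h(v_k)$; it then rotates not at the root but at $v_p$, the \emph{first} vertex in this order having a neighbour $v_r$ in the tail $\{v_{k+1},\ldots,v_{n-1}\}$. This choice makes the strictly lighter neighbour automatic ($x_{v_r}\le x_{v_{k+1}}<x_{v_k}$), and the domination $x_{v_p}\ge x_t$ for $t\in N(v_k)$ follows from the inductive height monotonicity: a heavier neighbour $v_s$ would sit earlier, hence lower, giving $h(v_k)\le h(v_s)+1\le h(v_p)+1<h(v_k)$. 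Second --- and this is the step your single-rotation scheme cannot reproduce --- the required non-adjacency $v_pv_k\notin E(G)$ may simply fail; the paper splits on $h(v_p)\ge h(v_k)-1$ versus $h(v_p)<h(v_k)-1$, and in the first case performs no rotation at all, deducing $h(v_{k+1})\ge h(v_k)$ directly from a shortest-path count ($h(v_{k+1})\ge h(v_q)+1\ge h(v_p)+1\ge h(v_k)$, where $v_q$ is the last early vertex on a shortest $v_0$--$v_{k+1}$ path), while only in the second case (where non-adjacency is guaranteed) does it invoke Lemma~\ref{Lemma2.4}. Without this inductive ordering and this dichotomy, the degenerate case you isolated remains open, so the proposal as it stands does not prove the lemma.
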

\begin{proof}At first, we can find a numeration of the vertices of $G$ such that $v_0\prec v_1\prec\cdots\prec v_{n-1}$ and $x_{v_0}\geq x_{v_1}\geq \cdots\geq x_{v_{n-1}}$. Next, we just need to show that $h(v_i)\leq h(v_{i+1})$ by induction. If $i=0$, then $h(v_0)=0<h(v_1)$ clearly. Assume that $h(v_i)\leq h(v_{i+1})$ for $i\leq k-1$. Now, we try to prove $h(v_k)\leq h(v_{k+1})$. Let $V=V_1\cup V_2$, where $V_1=\{v_0,v_1,\ldots,v_k\}$, $V_2=\{v_{k+1},\ldots,v_{n-1}\}$, without loss of generality, assume that $x_{v_k}>x_{v_{k+1}}$. Since $G$ is connected, there exists an edge between $V_1$ and $V_2$, and let $v_p$ be the first vertex of $V_1$ such that $v_pv_r\in E(G)$, where $v_r\in N(V_2)$.

\noindent{\bf Case 1.} $h(v_p)\geq h(v_{k})-1$.
Assume that $P_{v_0v_{k+1}}$ is the shortest path between $v_0$ and $v_{k+1}$ and $v_q\in V_1$ is that last one of $P_{v_0v_{k+1}}$, then $h(v_p)\leq h(v_q)$. And then $h(v_{k+1})\geq h(v_q)+1\geq h(v_p)+1\geq h(v_{k})$.

\noindent{\bf Case 2.} $h(v_p)< h(v_{k})-1$, which implies that $v_pv_k\notin E(G)$.
Note that $x_{v_r}\leq x_{v_{k+1}}<x_{v_k}\leq x_{v_p}$. And we claim that $x_{v_s}\leq x_{v_p}$ for any vertex $v_s\in N(v_k)$. If $v_s\in V_2$, then the claim holds obviously. If $v_s\in V_1$ and $x_{v_s}> x_{v_p}$, then $0\leq s<p\leq k$ and $h(v_s)\leq h(v_p)$. Thus, $h(v_k)\leq h(v_s)+1\leq h(v_p)<h(v_k)$, a contradiction.

Now, we consider the vertices $v_p$, $v_k$ and $v_r$. Note that $x_{v_r}<x_{v_k}\leq x_{v_p}$ and $x_{v_s}\leq x_{v_p}$ for any vertex $x_{v_s}\in N(v_k)$. Then by Lemma \ref{Lemma2.4}, there exists a connected graph $G'\in\mathcal{G}_{\pi}$ such that $\rho(A_{\alpha}(G'))>\rho(A_{\alpha}(G))$, a contradiction.
\end{proof}

\begin{theorem}\label{Theorem2.7} Let $G\in\mathcal{G}_{\pi}$ be a connected graph with $\alpha\in[0,1)$. If $\rho(A_{\alpha}(G))= \max\{\rho(A_{\alpha}(H))\mid H\in\mathcal{G}_{\pi}\}$, then $G$ has a BFS-ordering.
\end{theorem}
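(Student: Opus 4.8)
The plan is to take the numeration $v_0,\dots,v_{n-1}$ furnished by Lemma~\ref{Lemma2.6}, for which $x_{v_0}\ge x_{v_1}\ge\cdots\ge x_{v_{n-1}}$ and $h(v_0)\le h(v_1)\le\cdots\le h(v_{n-1})$, and to declare $v_i\prec v_j$ iff $i<j$. With this ordering, property~(1) of the BFS-definition is exactly the monotonicity of heights given by Lemma~\ref{Lemma2.6}, and property~(2) is immediate from Lemma~\ref{Lemma2.5}: if $i<j$ then $x_{v_i}\ge x_{v_j}$, so $d_G(v_i)\ge d_G(v_j)$ by part~(1) of that lemma (with equality forced by part~(2) when $x_{v_i}=x_{v_j}$). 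So only property~(3) requires work.

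For property~(3) I would argue by contradiction. Suppose there are edges $uv,xy\in E(G)$ and non-edges $uy,xv\notin E(G)$ with $h(u)=h(x)=h(v)-1=h(y)-1$ and $u\prec x$, yet $y\prec v$; note $u,v,x,y$ are pairwise distinct, since any coincidence would turn a prescribed non-edge into an edge or violate the height pattern. From $u\prec x$ and $y\prec v$ we get $x_u\ge x_x$ and $x_y\ge x_v$. Now form $G'=G-\{uv,xy\}+\{uy,xv\}$; matching $v_1=u,\ u_1=v,\ v_2=y,\ u_2=x$ in Lemma~\ref{Lemma2.3} gives exactly the hypotheses $x_{v_1}\ge x_{u_2}$ and $x_{v_2}\ge x_{u_1}$, so $\rho(A_\alpha(G'))\ge \rho(A_\alpha(G))$. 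The switch leaves every degree unchanged, hence $G'$ has degree sequence $\pi$.

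The next step is to show $G'$ is still connected, so that $G'\in\mathcal{G}_\pi$ and extremality applies. Here I would use the height structure: a shortest path from $u$ (resp.\ $x$) to the root $v_0$ decreases height monotonically and hence never meets the height-$(h(u)+1)$ vertices $v,y$, so it avoids both deleted edges; thus $u$ and $x$ remain joined to $v_0$ in $G-\{uv,xy\}$, while $v,y$ rejoin $v_0$ through the new edges $xv,uy$. Since any component of $G-\{uv,xy\}$ missing $v_0$ must lie beyond a deleted edge and therefore contains $v$ or $y$, adding $xv$ and $uy$ reconnects everything, so $G'$ is connected. Extremality then forces $\rho(A_\alpha(G'))=\rho(A_\alpha(G))$, and the ``furthermore'' clause of Lemma~\ref{Lemma2.3} yields $x_u=x_x$ and $x_v=x_y$.

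Thus a violation of property~(3) can occur only between tied eigenvector entries, where moreover $d_G(u)=d_G(x)$ and $d_G(v)=d_G(y)$ by Lemma~\ref{Lemma2.5}(2). This is exactly the point I expect to be the main obstacle: every strict-inequality comparison already orders the vertices correctly, and what remains is to break ties consistently so that no quadruple violates~(3). I would resolve this by choosing, among all numerations that refine the decreasing-$x$/increasing-$h$ order of Lemma~\ref{Lemma2.6}, one that minimizes the number of quadruples violating~(3), and then showing that a single remaining violation could be removed by swapping the tied pair $v,y$; such a swap preserves properties~(1) and~(2) because $v$ and $y$ share height, degree and eigenvector value, so the whole block between them consists of vertices with the same $x$-value and height. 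Verifying that this swap is genuinely non-increasing in the violation count, rather than trading one violation for several, is the delicate part; once it is settled, the minimizing order is a BFS-ordering and the theorem follows.
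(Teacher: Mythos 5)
Your proposal follows the paper's own proof of Theorem~\ref{Theorem2.7} essentially step for step: the ordering comes from Lemmas~\ref{Lemma2.5} and~\ref{Lemma2.6}, which settle properties (1) and (2), and a violation of property (3) is attacked with the two-edge switch of Lemma~\ref{Lemma2.3}, extremality of $G$ supplying the contradiction. At two points you are in fact more careful than the paper: you check that the four vertices of a violating quadruple are pairwise distinct, and you prove that the switched graph $G'$ is still connected (the height-monotone-path argument), which is genuinely needed for $G'\in\mathcal{G}_{\pi}$ and which the paper's proof simply asserts.

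The step you leave open is real, but it is exactly the step the paper suppresses. After ``Otherwise'' the paper writes ``without loss of generality, assume that $x_{v_s}<x_{v_t}$''; this is not a genuine WLOG, because $v_t\prec v_s$ only yields $x_{v_t}\geq x_{v_s}$, and in the equality case the switch gives no contradiction --- precisely your tied situation $x_u=x_x$, $x_v=x_y$ (with equal degrees, by Lemma~\ref{Lemma2.5}). Your caution about the local swap is also well founded: if a further vertex $w$ in the layer of $u,x$ has $wy\in E(G)$ and $wv\notin E(G)$, then the quadruple $(w,u,y,v)$ demands $y\prec v$ whenever $w\prec u$, while $(u,x,v,y)$ demands $v\prec y$, so swapping $v$ and $y$ trades one violation for another; repairing this forces one to also permute the tied parents (here, to move $u$ before $w$), i.e.\ the tie-breaking must be done globally, not one quadruple at a time. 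Neither your violation-minimization sketch nor the paper resolves this; in the tree setting of Biyiko\u{g}lu--Leydold \cite{T. Biyikolu} and Zhang \cite{Zhang xiao dong3} the difficulty essentially disappears, because each vertex of layer $h+1$ has a unique parent, so one can rebuild each layer by listing children grouped in parent order (decreasing eigenvector value within groups), the switching argument guaranteeing that this grouping refines the decreasing-$x$ order; but for general graphs in $\mathcal{G}_{\pi}$, where a vertex may have several neighbors in the previous layer, extra work is required. So the fair summary is: your proof matches the paper's and exposes its weak point honestly, but strictly speaking both arguments are incomplete at the same place, and a fully rigorous proof needs the layer-by-layer (or some other global) tie-breaking construction rather than the paper's unjustified WLOG or your unverified minimization step.
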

\begin{proof}Let $X$ be a unit eigenvector of $A_{\alpha}(G)$ corresponding to $\rho(A_{\alpha}(G))$. By Lemmas \ref{Lemma2.5} and \ref{Lemma2.6},
there exists a well-ordering $v_0\prec v_1\prec\cdots\prec v_{n-1}$ such that
$$x_{v_0}\geq x_{v_1}\geq \cdots\geq x_{v_{n-1}},$$
$$d_G(v_0)\geq d_G(v_1)\geq \cdots\geq d_G(v_{n-1}),$$
and
$$h(v_0)\leq h(v_1)\leq \cdots\leq h(v_{n-1}).$$
Let $v_iv_s,v_jv_t\in E(G)$ and $v_iv_t,v_jv_s\notin E(G)$ with $h(v_i)=h(v_j)=h(v_t)-1=h(v_s)-1$ and $v_i\prec v_j$, we just need to show that $v_s\prec v_t$. Otherwise, without loss of generality, assume that $x_{v_s}<x_{v_t}$. Then we can construct a new connected graph $G'\in\mathcal{G}_{\pi}$ from $G$ by deleting edges $v_iv_s, v_jv_t$ and adding edges $v_iv_t,v_jv_s$ such that $\rho(A_{\alpha}(G'))>\rho(A_{\alpha}(G))$ by Lemma\ref{Lemma2.3}, a contradiction.
\end{proof}

From the proof of Theorem \ref{Theorem2.7}, the following corollary is obtained easily.
\begin{corollary}\label{Corollary2.7} Let $G\in\mathcal{G}_{\pi}$ be a connected graph with $\alpha\in[0,1)$ and $\rho(A_{\alpha}(G))= \max\{\rho(A_{\alpha}(H))\mid H\in \mathcal{G}_{\pi}\}$. Then $G$ has a BFS-ordering consistent with the unit eigenvector $X$ corresponding to $\rho(A_{\alpha}(G))$ in such a way that $u\prec v$ implies $x_u\geq x_v$.
\end{corollary}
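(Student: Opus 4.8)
The plan is to observe that the BFS-ordering produced in the proof of Theorem~\ref{Theorem2.7} is already the desired one, so no fresh construction is required. The proof of Theorem~\ref{Theorem2.7} does not merely assert that \emph{some} BFS-ordering exists; it builds a concrete well-ordering $v_0\prec v_1\prec\cdots\prec v_{n-1}$ by first invoking Lemmas~\ref{Lemma2.5} and~\ref{Lemma2.6}. First I would fix the unit eigenvector $X$ corresponding to $\rho(A_\alpha(G))$ and relabel the vertices so that $x_{v_0}\geq x_{v_1}\geq\cdots\geq x_{v_{n-1}}$. This relabeling is exactly the relation $v_i\prec v_j\Rightarrow x_{v_i}\geq x_{v_j}$ that the statement calls consistency with $X$, so the consistency clause is built into the very definition of $\prec$.

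Next I would check that this same labeling satisfies the three defining conditions of a BFS-ordering. Condition (2), namely $u\prec v\Rightarrow d_G(u)\geq d_G(v)$, follows from Lemma~\ref{Lemma2.5}(1), since $x_{v_i}\geq x_{v_j}$ with $i<j$ forces $d_G(v_i)\geq d_G(v_j)$. Condition (1), $u\prec v\Rightarrow h(u)\leq h(v)$, is precisely the height-monotonicity $h(v_0)\leq h(v_1)\leq\cdots\leq h(v_{n-1})$ guaranteed by Lemma~\ref{Lemma2.6}. Condition (3), the cross-edge property, was verified at the end of the proof of Theorem~\ref{Theorem2.7} through the exchange argument of Lemma~\ref{Lemma2.3}. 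Since every BFS condition holds for the very ordering that sorts the entries of $X$, the corollary follows at once.

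The only point requiring genuine attention, and hence the main obstacle, is the treatment of ties: vertices with equal eigenvector entries. When $x_{v_i}=x_{v_j}$ the ordering is not pinned down by $X$ alone, so I must confirm that the remaining freedom can be reconciled with the height requirement without breaking (2). Here Lemma~\ref{Lemma2.5}(2) is decisive, since equal eigenvector entries force equal degrees, so no tie-breaking choice can violate condition (2); and Lemma~\ref{Lemma2.6} shows that among the orderings sorting $X$ one may simultaneously arrange the heights to be nondecreasing. Thus the tie-breaking already performed inside Lemma~\ref{Lemma2.6} is compatible with the eigenvector order, and this compatibility is the one subtlety I would make explicit rather than leave implicit in the phrase ``obtained easily.''
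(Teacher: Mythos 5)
Your proposal is correct and follows exactly the route the paper intends: the ordering built in the proof of Theorem~\ref{Theorem2.7} (sorting by eigenvector entries, then invoking Lemmas~\ref{Lemma2.5} and~\ref{Lemma2.6} for the degree and height monotonicity, and Lemma~\ref{Lemma2.3} for condition (3)) is already consistent with $X$ by construction, which is all the paper means by ``obtained easily.'' Your explicit treatment of ties via Lemma~\ref{Lemma2.5}(2) is a nice clarification of a point the paper leaves implicit, but it is not a departure from the paper's argument.
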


\begin{lemma}\label{Lemma2.8}\cite{Berman A} Let $M$ be a nonnegative irreducible symmetric matrix with spectral radius $\rho(M)$. If there exists a positive vector $Y>0$ and a positive real $\beta$ such that $MY<\beta Y$, then $\rho(M)<\beta$.
\end{lemma}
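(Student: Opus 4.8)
The statement to prove is Lemma~\ref{Lemma2.8}, a classical Perron--Frobenius-type result: if $M$ is nonnegative, irreducible, symmetric with spectral radius $\rho(M)$, and there is a positive vector $Y>0$ and positive scalar $\beta$ with $MY<\beta Y$ (entrywise strict), then $\rho(M)<\beta$.

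The plan is to exploit the Perron eigenvector rather than grind through the Collatz--Wielandt formula directly. Since $M$ is nonnegative, irreducible, and symmetric, Perron--Frobenius guarantees that $\rho(M)$ is a simple eigenvalue with a strictly positive eigenvector $Z>0$; moreover, by symmetry $\rho(M)$ is a genuine eigenvalue of the (real symmetric) matrix, and we may take $Z$ to be the unit Perron vector. First I would fix such a $Z$ and form the inner product $Z^{T}(MY)$. On one hand, using the hypothesis $MY<\beta Y$ together with $Z>0$ (so that pairing against a strictly positive vector preserves the strict inequality, provided $Y\neq 0$, which holds since $Y>0$), I get $Z^{T}(MY)<\beta\,Z^{T}Y$. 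On the other hand, by symmetry $Z^{T}M=\rho(M)Z^{T}$, so $Z^{T}(MY)=\rho(M)\,Z^{T}Y$. Combining the two gives $\rho(M)\,Z^{T}Y<\beta\,Z^{T}Y$, and since $Z^{T}Y>0$ (both vectors strictly positive), dividing yields $\rho(M)<\beta$, as required.

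The key steps in order are: (i) invoke Perron--Frobenius for the nonnegative irreducible symmetric $M$ to obtain a strictly positive eigenvector $Z>0$ for the eigenvalue $\rho(M)$; (ii) use symmetry to write $Z^{T}M=\rho(M)Z^{T}$; (iii) pair the hypothesis $MY<\beta Y$ with $Z>0$ to obtain the strict inequality $Z^{T}MY<\beta Z^{T}Y$; (iv) substitute and divide by the positive quantity $Z^{T}Y$. The main obstacle to state carefully is the preservation of \emph{strictness} when taking the inner product: one needs that $Z^{T}(\beta Y-MY)>0$, and since $\beta Y-MY$ is a nonzero vector with all nonnegative entries (indeed all strictly positive, by the entrywise strict hypothesis) and $Z$ has all strictly positive entries, the product of a strictly positive vector with a nonnegative nonzero vector is strictly positive. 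Establishing that $Z$ can be taken strictly positive (rather than merely nonnegative) is exactly what irreducibility buys us, so irreducibility is the hypothesis that makes the strict conclusion possible.

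I would remark that symmetry is not strictly essential (one could instead use the left Perron eigenvector of a general nonnegative irreducible $M$), but since the matrices $A_\alpha(G)$ arising in this paper are symmetric, taking $Z$ to be the right Perron eigenvector and using $Z^{T}M=\rho(M)Z^{T}$ is the cleanest route and keeps the argument self-contained.
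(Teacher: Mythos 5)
Your proof is correct. Note, however, that the paper itself offers no proof of this lemma to compare against: it is quoted verbatim from the reference of Berman and Plemmons on nonnegative matrices, and is used in the paper purely as a black box (in the proof of Lemma~\ref{Lemma2.10}). Your argument --- pair the hypothesis $MY<\beta Y$ against a strictly positive Perron eigenvector $Z$ of $M$, use $Z^{T}M=\rho(M)Z^{T}$, and divide by $Z^{T}Y>0$ --- is the standard eigenvector-pairing proof, and all the delicate points are handled properly: irreducibility is correctly identified as what guarantees $Z>0$ (hence the strict inequality $Z^{T}(\beta Y-MY)>0$), and your remark that symmetry is dispensable (one can use the left Perron eigenvector of a general irreducible nonnegative matrix) is accurate. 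The other classical route, which is closer to what one finds in Berman--Plemmons, is via the Collatz--Wielandt bound $\rho(M)\leq\max_{i}(MY)_{i}/y_{i}$ for any $Y>0$, which gives the conclusion in one line from the hypothesis; your eigenvector argument is an equally rigorous and essentially equivalent alternative, trading the min-max characterization for the Perron--Frobenius existence theorem.
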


\begin{lemma}\label{Lemma2.9}\cite{V. Nikiforov2,V. Nikiforov4} Let $G$ be a graph with maximal degree $\vartriangle$ and $\alpha\in[0,1)$. Then
$$\rho(A(G))\leq\rho(A_\alpha(G))\leq \vartriangle.$$
If the left equality holds, then $G$ has a $\rho(A(G))$-regular component. If the right equality holds, then $G$ is regular.
\end{lemma}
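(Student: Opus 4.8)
The plan is to split the two-sided inequality and base everything on the identity $A_\alpha(G)=A(G)+\alpha L(G)$, where $L(G)=D(G)-A(G)$ is the positive semidefinite Laplacian; this is immediate from $D(G)=L(G)+A(G)$. I would first record that $A(G)$ and $A_\alpha(G)$ are symmetric and, for $\alpha\in[0,1)$, entrywise nonnegative, so by Perron--Frobenius each spectral radius equals the largest eigenvalue and is attained by a nonnegative eigenvector, strictly positive on every component on which the relevant restriction is irreducible. This lets me pass freely between $\rho$ and $\lambda_1$.

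For the left inequality I would use the Rayleigh quotient. Taking a unit Perron eigenvector $u$ of $A(G)$ and testing $A_\alpha(G)$ against it,
\[
\rho(A_\alpha(G))=\lambda_1(A_\alpha(G))\ge u^{T}A_\alpha(G)u=u^{T}A(G)u+\alpha\,u^{T}L(G)u=\rho(A(G))+\alpha\,u^{T}L(G)u\ge\rho(A(G)),
\]
the last step because $L(G)\succeq 0$. For the equality discussion I would restrict to $\alpha\in(0,1)$, since at $\alpha=0$ the two radii coincide trivially while the conclusion may fail. Equality then forces $u^{T}L(G)u=\sum_{\{i,j\}\in E(G)}(u_i-u_j)^2=0$, so $u$ is constant on each component it charges; feeding $u=c\,\mathbf{1}$ back into $A(G)u=\rho(A(G))u$ on such a component shows every vertex there has degree $\rho(A(G))$, i.e. that component is $\rho(A(G))$-regular.

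For the right inequality I would localise to a component $G_1$ attaining $\rho(A_\alpha(G))$ and take a positive eigenvector $X$ on it. Choosing a vertex $k$ with $x_k=\max_i x_i>0$ and reading the $k$-th coordinate of $A_\alpha(G_1)X=\rho(A_\alpha(G))X$,
\[
\rho(A_\alpha(G))\,x_k=\alpha d_G(k)x_k+(1-\alpha)\sum_{j\in N(k)}x_j\le \alpha d_G(k)x_k+(1-\alpha)d_G(k)x_k=d_G(k)x_k\le\vartriangle x_k,
\]
which yields $\rho(A_\alpha(G))\le\vartriangle$. If equality holds then both estimates are tight, so $d_G(k)=\vartriangle$ and $x_j=x_k$ for every $j\in N(k)$; hence each neighbour of $k$ is again a maximizer of degree $\vartriangle$, and iterating through the connected $G_1$ makes $G_1$ a $\vartriangle$-regular component, equal to all of $G$ exactly when $G$ is connected.

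The inequalities themselves are routine once the Laplacian reformulation is in hand; the delicate part will be the equality analysis, which in both cases reduces to propagating a local rigidity (constancy of the eigenvector, respectively degree $\vartriangle$) across a connected component via its connectivity. I would also flag the bookkeeping point that both equality conclusions are intrinsically statements about a single component, so for disconnected $G$ the phrase ``$G$ is regular'' is accurate only under connectedness and should otherwise read ``$G$ has a $\vartriangle$-regular component,'' in parallel with the left-equality case.
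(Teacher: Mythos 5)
The paper never proves Lemma \ref{Lemma2.9}; it is imported from \cite{V. Nikiforov2,V. Nikiforov4} as a black box, so there is no in-paper argument to compare yours against, and your proposal must stand on its own merits. It does: the proof is correct and complete. The decomposition $A_\alpha(G)=A(G)+\alpha L(G)$ plus the Rayleigh quotient at a unit $\rho(A(G))$-eigenvector $u$ gives the left inequality, and for $\alpha\in(0,1)$ equality forces $u^{T}L(G)u=\sum_{ij\in E(G)}(u_i-u_j)^2=0$, so $u$ is constant on every component it charges, and the eigenvalue equation then makes each such component $\rho(A(G))$-regular. The right inequality via the maximal coordinate $x_k$ of a Perron vector on a component attaining $\rho(A_\alpha(G))$ is the standard argument, and your equality analysis correctly uses $1-\alpha>0$ to force $d_G(k)=\vartriangle$ and $x_j=x_k$ for all $j\in N(k)$, after which connectivity propagates regularity through that component. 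You also correctly spot that the lemma, as transcribed in this paper, needs two provisos: at $\alpha=0$ the left equality is automatic while the conclusion can fail (e.g.\ $P_3$ has no $\sqrt{2}$-regular component), and for disconnected $G$ the right equality yields only a $\vartriangle$-regular component (e.g.\ $K_3\cup K_2$ has $\rho(A_\alpha)=2=\vartriangle$ but is not regular). These are imprecisions of the statement, not gaps in your argument; in the paper's applications of the lemma (Lemma \ref{Lemma2.10} and Lemma \ref{Lemma4.6}) the graph is connected, so nothing downstream is affected.
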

\setlength{\unitlength}{0.6pt}
\begin{center}
\begin{picture}(209,79)
\put(0,79){\circle*{4}}
\put(31,56){\circle*{4}}
\qbezier(0,79)(15,68)(31,56)
\put(1,33){\circle*{4}}
\qbezier(1,33)(16,45)(31,56)
\put(71,56){\circle*{4}}
\qbezier(31,56)(51,56)(71,56)
\put(131,56){\circle*{4}}
\multiput(71,56)(10.00,0.00){6}{\qbezier(0,0)(0,0)(7.50,0.00)}
\put(173,56){\circle*{4}}
\qbezier(131,56)(152,56)(173,56)
\put(208,79){\circle*{4}}
\qbezier(173,56)(190,68)(208,79)
\put(209,36){\circle*{4}}
\qbezier(173,56)(191,46)(209,36)
\end{picture}
\centerline{Fig. The graph $H$.}
\end{center}

\noindent{\bf Definition 1.} Let $G$ be a simple graph. An {\it internal path} of $G$ is a path $P$ (or cycle) with vertices $v_1,v_2,\ldots,v_k$ (or $v_1=v_k$) such that $d_G(v_1)\geq 3$, $d_G(v_k)\geq 3$ and $d_G(v_2)=\cdots=d_G(v_{k-1})=2$.
\begin{lemma}\label{Lemma2.10} Let $G$ be a connected graph with $\alpha\in[0,1)$ and $uv$ be an edge on the internal path of $G$. If $G_{uv}$ is obtained from $G$ by subdivision of edge $uv$ into edges $uw$ and $wv$, then $\rho(A_\alpha(G_{uv}))<\rho(A_\alpha(G))$.
\end{lemma}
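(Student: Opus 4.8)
The plan is to read off how the drop in spectral radius is forced by the eigen-equation restricted to the internal path, and then certify it either through the subinvariance principle (Lemma~\ref{Lemma2.8}) or, equivalently, by comparing the reduced characteristic equations of $G$ and $G_{uv}$. Write $\rho=\rho(A_\alpha(G))$ and let $X>0$ be the Perron eigenvector of $A_\alpha(G)$. At every interior vertex $v_j$ of the internal path $v_1,\dots,v_k$ (which has degree $2$) the equation $A_\alpha(G)X=\rho X$ reads $(\rho-2\alpha)x_{v_j}=(1-\alpha)(x_{v_{j-1}}+x_{v_{j+1}})$, i.e.\ the second-order recurrence $x_{v_{j+1}}=t\,x_{v_j}-x_{v_{j-1}}$ with $t:=(\rho-2\alpha)/(1-\alpha)$. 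Subdividing $uv$ changes nothing except that it lengthens this chain by one vertex, so the whole problem reduces to understanding how solutions of this recurrence respond to lengthening the chain.

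The quantitative input I would establish first is $t\ge 2$, equivalently $\rho\ge 2$. Since $v_1,v_k$ have degree at least $3$, the internal path together with two extra edges at each end contains a subgraph that is either a double broom or (if $v_1\sim v_k$) a cycle with a pendant edge, each of adjacency index at least $2$; because $A_\alpha(H)\le A_\alpha(G)$ entrywise for a subgraph $H$ and the spectral radius of nonnegative symmetric matrices is monotone under entrywise domination, $\rho(A_0(G))\ge 2$, whence $\rho\ge\rho(A_0(G))\ge 2$ by Lemma~\ref{Lemma2.9}. Moreover $G$ is not regular (it carries vertices of degree $2$ and of degree $\ge 3$), so the equality case of Lemma~\ref{Lemma2.9} gives $\rho>\rho(A_0(G))\ge 2$ for every $\alpha\in(0,1)$; only the adjacency case $\alpha=0$ permits $\rho=2$. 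Writing $t=2\cosh\theta$ with $\theta\ge 0$, the recurrence has characteristic roots $e^{\pm\theta}$, and $\theta>0$ precisely when $\rho>2$.

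To produce the certificate I would eliminate the interior degree-$2$ vertices by a Schur complement, which is legitimate because $\rho\ge 2$ strictly exceeds the top of the interior block's spectrum ($2\alpha+2(1-\alpha)\cos(\pi/L)<2$). This replaces the chain of $L$ edges between $v_1$ and $v_k$ by an effective $v_1$--$v_k$ edge of weight proportional to $\sinh\theta/\sinh(L\theta)$ together with explicit diagonal corrections at $v_1,v_k$, all functions of $\rho$; the Perron root of $G$ is then the largest root of the resulting reduced characteristic equation $F_L(\rho)=0$. Subdivision is the single move $L\mapsto L+1$, and for $\theta>0$ the weight $\sinh\theta/\sinh(L\theta)$ strictly decreases, so the effective coupling strictly weakens. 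Packaging the same inequality as a vector $Y$ on $V(G_{uv})$ with $A_\alpha(G_{uv})Y\le\rho Y$ and one strict coordinate lets a standard strengthening of Lemma~\ref{Lemma2.8} (coordinatewise $\le$ with one strict entry already forces $\rho(M)<\beta$, via the positive left Perron eigenvector) conclude $\rho(A_\alpha(G_{uv}))<\rho$.

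The step I expect to be the main obstacle is the passage from ``the effective coupling weakens'' to ``the top root strictly decreases,'' because the effective matrix depends on $\rho$ itself: the comparison is the nonlinear statement that lowering $L\mapsto L+1$ strictly lowers the largest root of $F_L$, and it must combine the monotonicity of $F_L$ in $L$ with the sign with which $F_L$ crosses zero at the Perron root. This is genuinely global: a purely local modification of $X$ across $uv$ does not satisfy subinvariance at \emph{both} endpoints simultaneously, since the two endpoint deficits are only balanced through the whole chain. It is also exactly where the hypothesis is used, for at $\theta=0$ (that is, $\rho=2$) the hyperbolic functions collapse to linear ones and strict monotonicity is lost; this degeneracy is realised at $\alpha=0$ by the double brooms, where subdivision truly leaves the index equal to $2$, whereas for every $\alpha\in(0,1)$ we have shown $\rho>2$ and hence a strict decrease. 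The cycle version of the internal path, $v_1=v_k$, is handled identically by performing the elimination on a loop rather than a segment.
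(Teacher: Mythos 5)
Your write-up is a plan whose decisive step is left open, and the heuristic you offer for that step is insufficient. After the Schur complement, the chain of $L$ edges between the path endpoints contributes not only the off-diagonal coupling $(1-\alpha)\sinh\theta/\sinh(L\theta)$, which indeed decreases in $L$, but also diagonal corrections $(1-\alpha)\sinh((L-1)\theta)/\sinh(L\theta)=(1-\alpha)\bigl(\cosh\theta-\sinh\theta\coth(L\theta)\bigr)$ at each endpoint, and these \emph{increase} in $L$. The two effects pull the top root in opposite directions, so ``the effective coupling strictly weakens'' does not by itself lower the largest root of $F_L$; deciding which effect wins is precisely the content of the lemma, and you explicitly defer it (``the step I expect to be the main obstacle''). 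As it stands, nothing beyond the preliminaries has been proved.

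Moreover, your structural claim that no purely local modification of $X$ can be subinvariant at both endpoints simultaneously is false, and it is exactly where you miss the idea that closes the argument: since subdividing \emph{any} edge of the internal path produces the same graph $G_{uv}$ up to isomorphism, one is free to subdivide next to a minimum entry of the Perron vector along the path. This is what the paper does: with the path written $u_0u_1\ldots u_{k+1}$, WLOG $x_0\le x_{k+1}$, it takes $t$ smallest with $x_t=\min_{0\le i\le k+1}x_i$ (so $t<k+1$), sets $u=u_t$, $v=u_{t+1}$, defines $y_w=x_u$ and $y_p=x_p$ elsewhere (modifying $y_u$ only in the subcase $\rho x_u\le 2\alpha x_u+(1-\alpha)(x_u+x_v)$), and verifies subinvariance coordinate by coordinate; Lemma~\ref{Lemma2.8} --- needed, exactly as you note, in the strengthened ``$\le$ with at least one strict coordinate'' form, since equality holds at vertices away from the path --- then yields the strict drop. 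On the credit side, two of your remarks are correct and sharper than the paper's own text: $\rho(A_\alpha(G))>2$ for all $\alpha\in(0,1)$ via the equality case of Lemma~\ref{Lemma2.9}, and the fact that at $\alpha=0$ the statement genuinely fails for $G=H$ (the double broom of the figure, where subdivision keeps the adjacency index equal to $2$), so the paper's sentence ``since $G$ is not a regular graph, $\rho(A_{\alpha}(G))>2$'' is unjustified there (its application in Lemma~\ref{Lemma4.4} survives, since that graph properly contains a cycle with a pendant path). But this caveat does not fill the hole at the center of your argument.
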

\begin{proof}Let $P=u_0u_1\ldots u_{k+1}$ be the internal path of $G$ and $X$ be a unit eigenvector of $A_{\alpha}(G)$ corresponding to $\rho(A_{\alpha}(G))$ in which $x_i$ is corresponding to $u_i$. Without loss of generality, assume that $x_0\leq x_{k+1}$. Let $t$ be the smallest index such that $x_t=\min\limits_{0\leq i\leq k+1}x_i$, so $t<k+1$. Without loss of generality, assume that $u=u_t$ and $v=u_{t+1}$. Note that $H\subset G$ and $\rho(A(H))=2$. Since $G$ is not a regular graph, $\rho(A_{\alpha}(G))>2$ by Lemma \ref{Lemma2.9}.

\noindent{\bf Case 1.} $t>0$. We can construct a positive vector $Y$ as follows.
$$y_p=\left\{
    \begin{array}{ll}
      x_p, & \mbox{if} ~~p\neq w; \\[3mm]
      x_u, & \mbox{if} ~~p=w.
    \end{array}
  \right.$$
If $x_{t-1}>x_u$, then
\begin{eqnarray*}
(A_{\alpha}(G_{uv})Y)_u&=&2\alpha x_u+(1-\alpha)(x_{t-1}+x_u)\\
&\leq& 2\alpha x_u+(1-\alpha)(x_{t-1}+x_v)\\
&=& \rho(A_{\alpha}(G))x_u\\
&=& \rho(A_{\alpha}(G))y_u.
\end{eqnarray*}
and
\begin{eqnarray*}
(A_{\alpha}(G_{uv})Y)_w&=&2\alpha x_u+(1-\alpha)(x_u+x_v)\\
&<& 2\alpha x_u+(1-\alpha)(x_{t-1}+x_v)\\
&=& \rho(A_{\alpha}(G))x_u\\
&=& \rho(A_{\alpha}(G))y_w.
\end{eqnarray*}
If $x_{t-1}=x_u$, then
$$(A_{\alpha}(G_{uv})Y)_u=2x_u<\rho(A_{\alpha}(G))x_u=\rho(A_{\alpha}(G))y_u$$and$$(A_{\alpha}(G_{uv})Y)_w=(A_{\alpha}(G_{uv})Y)_w.$$
Therefore, $A_{\alpha}(G_{uv})Y<\rho(A_{\alpha}(G))Y$ and $\rho(A_\alpha(G_{uv}))<\rho(A_\alpha(G))$ by Lemma \ref{Lemma2.8}.

\noindent{\bf Case 2.} $t=0$, i.e., $d_G(u)\geq3$. Let $S=\sum\limits_{j\in N_G(u)\setminus\{v\}}x_j$.

\noindent{\bf Subcase 2.1.} $\rho(A_\alpha(G))x_u>2\alpha x_u+(1-\alpha)x_u+(1-\alpha)x_v$.

We can construct a positive vector $Y$ as follows.
$$y_p=\left\{
    \begin{array}{ll}
       x_p, & \mbox{if} ~~p\neq w; \\[3mm]
       x_u, & \mbox{if} ~~p=w.
    \end{array}
  \right.$$
Then
\begin{eqnarray*}
(A_{\alpha}(G_{uv})Y)_u&=&\alpha d_G(u)x_u+(1-\alpha)(S+x_u)\\
&\leq& 2\alpha x_u+(1-\alpha)(S+x_v)\\
&=& \rho(A_{\alpha}(G))x_u\\
&=& \rho(A_{\alpha}(G))y_u.
\end{eqnarray*}
\begin{eqnarray*}
(A_{\alpha}(G_{uv})Y)_w&=&2\alpha x_u+(1-\alpha)(x_u+x_v)\\
&<& \rho(A_{\alpha}(G))x_u\\
&=& \rho(A_{\alpha}(G))y_w.
\end{eqnarray*}
Therefore, $A_{\alpha}(G_{uv})Y<\rho(A_{\alpha}(G))Y$ and $\rho(A_\alpha(G_{uv}))<\rho(A_\alpha(G))$ by Lemma \ref{Lemma2.8}.

\noindent{\bf Subcase 2.2.} $\rho(A_\alpha(G))x_u\leq2\alpha x_u+(1-\alpha)x_u+(1-\alpha)x_v$.

We can construct a positive vector $Y$ as follows.
$$y_p=\left\{
    \begin{array}{ll}
      x_p, & \mbox{if} ~~p\neq u,w; \\[3mm]
      x_u, & \mbox{if} ~~p=w; \\[3mm]
      \frac{1}{1-\alpha}[(\rho(A_\alpha(G))-2\alpha)x_u-(1-\alpha)x_v] & \mbox{if} ~~p=u.
    \end{array}
  \right.$$
Note that $$\rho(A_{\alpha}(G))x_u=\alpha d_G(u)x_u+(1-\alpha)(S+x_v),$$
thus $$y_u=\frac{\alpha}{1-\alpha}(d_G(u)-2)x_u+s>0.$$
Obviously, by $\rho(A_{\alpha}(G))>2$ we have
$$2x_u-x_v< y_u\leq x_u,$$
that is,
$$y_u\leq x_u<x_v.$$
By $y_u\leq x_u$, we observe that
$$S\leq \frac{1+\alpha-\alpha d_G(u)}{1-\alpha}x_u.$$
Then
\begin{eqnarray*}
(A_{\alpha}(G_{uv})Y)_w&=&2\alpha x_u+(1-\alpha)(y_u+x_v)\\
&=& 2\alpha x_u+(\rho(A_{\alpha}(G))-2\alpha)x_u-(1-\alpha)x_v+(1-\alpha)x_v\\
&=& \rho(A_{\alpha}(G))x_u\\
&=& \rho(A_{\alpha}(G))y_w.
\end{eqnarray*}
For any vertex $z\in N_G(u)\setminus \{v\}$,
\begin{eqnarray*}
(A_{\alpha}(G_{uv})Y)_z&=&\alpha d_G(z)x_z+(1-\alpha)(\sum\limits_{j\in N_G(z)\setminus \{u\}}x_j+y_u)\\
&\leq& \alpha d_G(z)x_z+(1-\alpha)(\sum\limits_{j\in N(z)\setminus \{u\}}x_j+x_u)\\
&=& \rho(A_{\alpha}(G))x_z\\
&=& \rho(A_{\alpha}(G))y_z.
\end{eqnarray*}
Next, we only need to check it at the vertex $u$. Note that
$$\rho(A_{\alpha}(G))S~\geq \alpha S+(1-\alpha)(d_G(u)-1)x_u.$$
Then
\begin{eqnarray*}
(A_{\alpha}(G_{uv})Y)_u&=&\alpha d_G(u)y_u+(1-\alpha)(S+x_u)\\
&=& \frac{\alpha^2}{1-\alpha}d_G(u)(d_G(u)-2)x_u+\alpha(d_G(u)-1)S+S+(1-\alpha)x_u.
\end{eqnarray*}
\begin{eqnarray*}
\rho(A_{\alpha}(G))y_u&=&\rho(A_{\alpha}(G))[\frac{\alpha}{1-\alpha}(d_G(u)-2)x_u+S]\\
&=& \frac{\alpha}{1-\alpha}(d_G(u)-2)\rho(A_{\alpha}(G))x_u+\rho(A_{\alpha}(G))S\\
&\geq& \frac{\alpha^2}{1-\alpha}d_G(u)(d_G(u)-2)x_u+\alpha(d_G(u)-1)S\\
&+& \alpha(d_G(u)-2)x_v+(1-\alpha)(d_G(u)-1)x_u.
\end{eqnarray*}
And combining with $x_u<x_v$, we get
\begin{eqnarray*}
\rho(A_{\alpha}(G))y_u-(A_{\alpha}(G_{uv})Y)_u&\geq&\alpha(d_G(u)-2)x_v+(1-\alpha)(d_G(u)-2)x_u-S\\
&\geq& \alpha(d_G(u)-2)x_v+(1-\alpha)(d_G(u)-2)x_u-\frac{1+\alpha-\alpha d_G(u)}{1-\alpha}x_u\\
&>& \alpha(d_G(u)-2)x_u+(1-\alpha)(d_G(u)-2)x_u-\frac{1+\alpha-\alpha d_G(u)}{1-\alpha}x_u\\
&=& \frac{d_G(u)-3+\alpha}{1-\alpha}x_u\\
&\geq& 0.
\end{eqnarray*}
Therefore, $A_{\alpha}(G_{uv})Y<\rho(A_{\alpha}(G))Y$ and $\rho(A_\alpha(G_{uv}))<\rho(A_\alpha(G))$ by Lemma \ref{Lemma2.8}.
\end{proof}

\section{The $A_{\alpha}$-spectral radius of trees}
The graph $\mathcal{T}^*_{\pi}$ has been introduced by Zhang \cite{Zhang xiao dong3}, let's go over it.
For a given nonincreasing degree sequence $\pi=(d_0,d_1,\ldots,d_{n-1})$ of a tree with $n\geq3$, the $\mathcal{T}^*_{\pi}$ can be construct as follows. Assume that $d_m>1$ and $d_{m+1}=\cdots=d_{n-1}=1$ for $0\leq m<n-1$. Put $s_0=0$, select a vertex $v_{01}$ as a root and begin with $v_{01}$ in layer 0. Put $s_1=d_0$ and select $s_1$ vertices $\{v_{11},\ldots,v_{1s_1}\}$ in layer 1 such that they are adjacent to $v_{01}$. Thus $d(v_{01})=d_0=s_1$. We continue to construct all other layer by recursion. In general, put $s_t=d_{s_0+s_1+\cdots+s_{t-2}+1}+\cdots+d_{s_0+s_1+\cdots+s_{t-2}+s_{t-1}}-s_{t-1}$ for $t\geq2$ and assume that all vertices in layer $t$ have been constructed and are denoted by $\{v_{t1},\ldots,v_{ts_t}\}$ with $d(v_{t-1,1})=d_{s_0+s_1+\cdots+s_{t-2}+1}$,\ldots,$d(v_{t-1,s_{t-1}})=d_{s_0+s_1+\cdots+s_{t-2}+s_{t-1}}.$
Now using the induction hypothesis, we construct all vertices in layer $t+1$. Put $s_{t+1}=d_{s_0+\cdots+s_{t-1}+1}+\cdots+d_{s_0+s_1+\cdots+s_{t-2}+s_{t-1}+s_t}-s_t$. Select $s_{t+1}$ vertices $\{v_{t+1,1},\ldots,v_{t+1,s_{t+1}}\}$ in layer $t+1$ such that $v_{t+1,i}$ is adjacent to $v_{tr}$ for $r=1$ and $1\leq i\leq d_{s_0+\cdots+s_{t-1}+1}-1$ and for $2\leq r\leq s_t$ and $d_{s_0+\cdots+s_{t-1}+1}+d_{s_0+\cdots+s_{t-1}+2}+\cdots+d_{s_0+\cdots+s_{t-1}+r-1}-r+2\leq i\leq d_{s_0+\cdots+s_{t-1}+1}+d_{s_0+\cdots+s_{t-1}+2}+\cdots+d_{s_0+\cdots+s_{t-1}+r}-r$. Thus $d(v_{tr})=d_{s_0+\cdots+s_{t-1}+r}$ for $1\leq r\leq s_t$. Assume that $m=s_0+\cdots+s_{p-1}+q$. Put $s_{p+1}=d_{s_0+\cdots+s_{p-1}+1}+\cdots+d_{s_0+\cdots+s_{p-1}+q}-q$ and select $s_{p+1}$ vertices $\{v_{p+1,1},\ldots,v_{p+1,s_{p+1}}\}$ in layer $p+1$ such that $v_{p+1,i}$ is adjacent to $v_{pr}$ for $1\leq r\leq q$ and $d_{s_0+\cdots+s_{p-1}+1}+d_{s_0+\cdots+s_{p-1}+2}+\cdots+d_{s_0+\cdots+s_{p-1}+r-1}-r+2\leq i\leq d_{s_0+\cdots+s_{p-1}+1}+d_{s_0+\cdots+s_{p-1}+2}+\cdots+d_{s_0+\cdots+s_{p-1}+r}-r$. Thus $d(v_{p,i})=d_{s_0+\cdots+s_{p-1}+i}$ for $1\leq i\leq q$. In this way, we obtain a tree $\mathcal{T}^*_{\pi}$ which is of order $n$ with degree sequence $\pi$.

\begin{lemma}\label{Lemma3.1}\cite{Zhang xiao dong3} For a given degree sequence $\pi$ of some tree, there exists a unique tree $\mathcal{T}^*_{\pi}$ with degree sequence $\pi$ having a $BFS$-ordering. Moreover, any two trees with same degree sequence and having $BFS$-ordering are isomorphic.
\end{lemma}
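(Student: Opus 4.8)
The plan is to split the statement into an existence claim and a uniqueness (isomorphism) claim, and to handle both through the layer-by-layer structure that a BFS-ordering forces on a tree. For existence I would simply take the tree $\mathcal{T}^*_{\pi}$ produced by the recursive construction preceding the lemma and verify that the ordering of its vertices used there (root first, then layer by layer, and within each layer by non-increasing degree) satisfies conditions (1)--(3) of Definition 1. Conditions (1) and (2) are immediate from the way the vertices are selected: heights increase layer by layer and the degrees are assigned from $\pi$ in non-increasing order, so $d(v_{tr})=d_{s_0+\cdots+s_{t-1}+r}$ is non-increasing both within and across layers. Condition (3) is precisely the greedy rule by which children are attached to parents in the construction, so it holds by fiat.

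For the uniqueness part I would first record the structural consequences of the three axioms for an arbitrary tree $T$ with BFS-ordering $v_0\prec v_1\prec\cdots\prec v_{n-1}$ rooted at $v_0$. Rooting at $v_0$ makes $h(v)$ the tree distance to $v_0$, so every edge of $T$ joins two vertices whose heights differ by exactly $1$; hence each non-root vertex has a unique parent in the preceding layer and all of its remaining neighbours (its children) lie in the next layer. By (1) each height-class (layer) is a contiguous block of the ordering, and by (2) the degrees are non-increasing along the ordering, so $v_i$ has degree $d_i$ and the degrees inside each layer are the corresponding consecutive block of $\pi$. Consequently the number of children of $v_i$ equals $d_i-1$ (and $d_0$ for the root), and the layer sizes $s_0=1$, $s_1=d_0$, $s_{t+1}=\sum_{v\in\text{layer }t}(d(v)-1)$ are determined by $\pi$ alone.

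The decisive step is to show that condition (3) forces the parent assignment to be the consecutive ("leftmost") one, which I would prove by induction on the layer index $t$. Assuming layer $t$ is the determined block $v_{a_t}\prec\cdots\prec v_{a_{t+1}-1}$ with known degrees, I claim the first $d(v_{a_t})-1$ vertices of layer $t+1$ are the children of $v_{a_t}$, the next $d(v_{a_t+1})-1$ are the children of $v_{a_t+1}$, and so on. Indeed, if some child $y$ of a later parent $x$ preceded some child $v$ of an earlier parent $u$ with $u\prec x$ and $u\neq x$, then $u,x,v,y$ realize exactly the configuration of (3) (the hypotheses $uy,xv\notin E$ hold since in a tree $v$ and $y$ have the single parents $u$ and $x$), and (3) would give $v\prec y$, contradicting $y\prec v$. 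Thus the parent function is pinned down by $\pi$. Given two trees $T_1,T_2\in\mathcal{G}_{\pi}$ with BFS-orderings, the bijection sending the $i$-th vertex of $T_1$ to the $i$-th vertex of $T_2$ then preserves this determined parent relation and hence all edges, so it is an isomorphism; taking $T_2=\mathcal{T}^*_{\pi}$ yields both the isomorphism assertion and the uniqueness of $\mathcal{T}^*_{\pi}$.

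I expect the main obstacle to be the rigorous extraction of the consecutive parent assignment from axiom (3): one must check that the side hypotheses $uy,xv\notin E$ genuinely apply (that the two parents are distinct and neither is adjacent to the other's child), handle possible degree ties within a layer (which leave a harmless freedom in the order of children of a common parent but do not change the isomorphism type, since (3) only constrains distinct parents), and thread the induction so that the determined layer sizes exhaust all $n$ vertices with none left unassigned, the latter relying on $\pi$ being a genuine tree degree sequence, i.e. $\sum_i d_i=2(n-1)$.
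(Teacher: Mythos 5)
Your proposal is correct, but there is nothing in the paper to compare it against: the lemma is imported verbatim from the cited reference \cite{Zhang xiao dong3}, and the present paper uses it as a black box in the one-line proof of Theorem \ref{Theorem3.3}. Your argument supplies exactly what that citation hides, and it is the natural (essentially canonical) one: existence by checking that the layer-by-layer construction of $\mathcal{T}^*_{\pi}$ satisfies (1)--(3) of the BFS definition, and uniqueness by showing that the three axioms determine the edge set as a set of pairs of positions in the ordering. The chain you use is sound: condition (1) makes each layer a contiguous block, condition (2) forces $d(v_i)=d_i$ since both the ordering's degrees and $\pi$ are nonincreasing, in a tree every vertex has a unique parent one layer up, so the layer sizes are computable from $\pi$ alone, and condition (3) --- applied to distinct parents $u\prec x$ with children $v,y$, where the side hypotheses $uy\notin E(G)$, $xv\notin E(G)$ hold automatically because parents are unique --- pins the parent assignment down to the consecutive one. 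Your handling of the two delicate points is also right: (3) is vacuous for children of a common parent (its hypothesis $uy\notin E(G)$ cannot hold when $u=x$), so degree ties within a layer cost nothing; and once the parent function is determined by $\pi$, the position-preserving bijection between any two BFS-ordered trees with degree sequence $\pi$ is an isomorphism. The only additions I would ask for are two tacit facts in your second paragraph: condition (1) forces the root to be the first vertex of the ordering (any $w$ with $w\prec v_0$ would satisfy $h(w)=0$, hence $w=v_0$), and in a tree no edge joins two vertices of equal height (such an edge would close an odd walk, impossible in an acyclic, hence bipartite, graph), which is what guarantees that every edge is a parent--child edge and that your application of (3) exhausts all edges.
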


\vspace*{3mm}
\noindent{\bf Proof of Theorem \ref{Theorem3.3}.} Let $T$ be the tree that has largest $A_{\alpha}$-spectral radius in the class of all trees with degree sequence $\pi$. By Theorem \ref{Theorem2.7}, $T$ must have a BFS-ordering. Then $T\cong \mathcal{T}^*_{\pi}$ by Lemma \ref{Lemma3.1}.

\section{The $A_{\alpha}$-spectral radius of unicyclic graphs}
At first, we will introduce a special unicyclic graph $\mathcal{U}^*_{\pi}$ that has been defined by Zhang \cite{Zhang xiao dong2}. For a given nonincreasing degree sequence $\pi=(d_0,d_1,\ldots,d_{n-1})$ of a unicyclic graph with $n\geq3$, the $\mathcal{U}^*_{\pi}$ can be construct as follows: If $d_0=2$, then $\mathcal{U}^*_{\pi}=C_n$. If $d_0\geq3$ and $d_1=2$, then $\mathcal{U}^*_{\pi}$ consists of a triangle with $d_0-2$ hanging paths, attached at one vertex of the cycle, whose lengths are almost equal. If $d_1\geq3$, then we can use breadth-first-search method to defined $\mathcal{U}^*_{\pi}$ as follows. Select a vertex $v_{01}$ as a root and begin with $v_{01}$ of the zeroth layer. Put $s_1=d_0$ and select $s_1$ vertices $\{v_{11},v_{12},\ldots,v_{1s_1}\}$ of the first layer such that they are adjacent to $v_{01}$, and $v_{11}$ is adjacent to $v_{12}$. Thus $d(v_{01})=s_1=d_0$. Next we construct the second layer as follows. Select $s_2$ vertices $\{v_{21},v_{22},\ldots,v_{2s_2}\}$ of the second layer such that $d_{v_{11}}-2$ vertices adjacent to $v_{11}$, $d_{v_{12}}-2$ vertices adjacent to $v_{12}$ and $d_{v_{1i}}-1$ vertices adjacent to $v_{1i}$ for $i=3,\cdots,s_1$. In general, assume that all vertices of the $t$th layer have been constructed and are denoted by $\{v_{t1},v_{t2},\ldots,v_{ts_t}\}$. Now, using the induction hypothesis, we construct all vertices of the $t+1$th layer. Select $s_{t+1}$ vertices $\{v_{t+1,1},\ldots,v_{t+1,s_{t+1}}\}$ of the $t+1$th layer such that $d(v_{ti})-1$ vertices are adjacent to $v_{ti}$ for $i=1,\ldots,s_t$. In this way, we obtain only one unicyclic graph with degree sequence $\pi$.

\begin{lemma}\label{Lemma4.1}\cite{Zhang xiao dong2} For a given degree sequence $\pi$ of some unicyclic graph, $\mathcal{U}^*_{\pi}$ has a $BFS$-ordering.
\end{lemma}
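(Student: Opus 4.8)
The plan is to read an explicit well-ordering directly off the layered construction of $\mathcal{U}^*_{\pi}$ and then verify the three defining conditions of a BFS-ordering one at a time. I would take the root $v_{01}$ as the smallest element and order the remaining vertices first by layer and, within each layer, by the index assigned during the recursion, listing the children of a vertex immediately according to the order of their parents:
$$v_{01}\prec v_{11}\prec v_{12}\prec\cdots\prec v_{1s_1}\prec v_{21}\prec v_{22}\prec\cdots.$$
All three cases of the definition ($d_0=2$; $d_0\ge 3$ with $d_1=2$; and $d_1\ge3$) are handled under this single scheme.

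Condition (1) is immediate, since the vertices are enumerated layer by layer and height is constant within a layer, so $u\prec v$ forces $h(u)\le h(v)$. For condition (2) I would check that reading the degrees along $\prec$ reproduces the nonincreasing sequence $\pi$; this is exactly how the construction allots degrees, $v_{01}$ receiving $d_0$, the first layer receiving $d_1,d_2,\ldots$ in order, and so on. The only bookkeeping point is the cycle: in the $d_1\ge3$ case the triangle vertices $v_{11},v_{12}$ each have one incident edge taken by the horizontal edge $v_{11}v_{12}$ and hence spawn $d_{v_{11}}-2$ and $d_{v_{12}}-2$ children, so their total degrees remain $d_1$ and $d_2$; in the triangle-with-paths case every non-root vertex has degree $2$ except the leaves, which lie in the deepest layers. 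Since $\pi$ is nonincreasing and vertices are created in breadth-first order, the degrees decrease weakly along $\prec$.

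The heart of the matter is condition (3), the non-crossing requirement for edges joining consecutive layers. The key point is that the unique cycle never produces an offending downward edge: in each case the extra edge closing the cycle is either horizontal---the edge $v_{11}v_{12}$, or the farthest edge of an odd $C_n$, both joining vertices of equal height---or, in an even $C_n$, it makes the antipodal vertex adjacent to two vertices of the layer above, a configuration excluded from condition (3) because it would force the common child $v=y$, contradicting the hypothesis $uy\notin E(G)$. Hence every height-$(k{+}1)$ vertex has a well-defined unique parent at height $k$, and it remains to show that the child-assignment is monotone: whenever two parents satisfy $u\prec x$, all children of $u$ precede all children of $x$, which is exactly how the recursion attaches children (in increasing order of the parent index). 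Monotone child-assignment is precisely condition (3). I expect the main obstacle to be the uniform bookkeeping across the three cases, and in particular verifying in the triangle-with-paths case that the almost-equal path lengths distribute the degree-$1$ and degree-$2$ vertices across layers compatibly with conditions (2) and (3) at once; a cleaner alternative would be to note that $\mathcal{U}^*_{\pi}$ arises from a BFS tree by adding a single equal-height edge and to deduce the claim from the tree case in Lemma \ref{Lemma3.1}, checking that inserting one horizontal edge preserves all three properties.
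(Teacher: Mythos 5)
The paper itself offers no proof of this lemma: it is imported verbatim from \cite{Zhang xiao dong2}, so there is no in-paper argument to compare against. Your primary argument --- the direct verification --- is correct in substance and is the natural (presumably Zhang's own) route: ordering the vertices layer by layer, in the order the recursion creates them, makes condition (1) trivial; condition (2) holds because the construction distributes the entries of the nonincreasing sequence $\pi$ exactly along this order (with the almost-equal path lengths guaranteeing, in the triangle-with-paths case, that degree-$1$ vertices occur only in the last two layers and can be placed after the degree-$2$ vertices of their layer); and condition (3) reduces to the monotone child assignment built into the recursion, since the edge closing the cycle is either horizontal (the edge $v_{11}v_{12}$, or the far edge of an odd $C_n$), hence never satisfies $h(u)=h(v)-1$, or, in an even $C_n$, gives the antipodal vertex two parents --- a configuration condition (3) never tests, exactly as you say, because $v=y$ would contradict $uy\notin E(G)$.

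However, the ``cleaner alternative'' you offer at the end is unsound, and you should not lean on it to discharge the bookkeeping you admit leaving open. Deleting the horizontal edge $v_{11}v_{12}$ from $\mathcal{U}^*_{\pi}$ lowers the degrees of $v_{11}$ and $v_{12}$ by one, and the resulting tree is in general not a BFS-graph at all (in particular it need not be $\mathcal{T}^*_{\pi'}$ for any ordering). Concretely, take $\pi=(3,3,3,3,3,3,1,1,1,1,1,1)$: in $\mathcal{U}^*_{\pi}$ the layer-$2$ vertices $v_{21},v_{22}$ have degree $3$, so after the deletion the height-$1$ vertices $v_{11},v_{12}$ have degree $2$ while $v_{21}$ has degree $3$; any well-ordering must put $v_{11}\prec v_{21}$ by condition (1), and then condition (2) would demand $2\geq 3$. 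So the unicyclic statement cannot be reduced to Lemma \ref{Lemma3.1} by removing and re-inserting the horizontal edge; the direct verification is the argument that must be carried out in full, and, as your sketch shows, it does go through.
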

\begin{lemma}\label{Lemma4.2}\cite{Zhang xiao dong2} Let $\pi=\{d_0,d_1,\ldots,d_{n-1}\}$ be a positive nonincreasing integer sequence with even sum and $n\geq3$. Then $\pi$ is a unicyclic graphic if and only if $\sum\limits_{i=0}^{n-1}d_i=2n$.
\end{lemma}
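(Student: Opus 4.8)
The statement is an ``if and only if'', and I would treat the two directions separately, since only the reverse implication carries real content. For \textbf{necessity} (a unicyclic realization forces $\sum_{i=0}^{n-1}d_i=2n$) the argument is immediate: a connected unicyclic graph $G$ on $n$ vertices has exactly one cycle, hence is a spanning tree together with one extra edge, so $|E(G)|=n$; the handshaking identity $\sum_{i=0}^{n-1}d_i=2|E(G)|$ then yields $\sum_{i=0}^{n-1}d_i=2n$, which is automatically even.

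For \textbf{sufficiency} I would induct on $n$ by deleting leaves, which parallels the breadth-first construction of $\mathcal{U}^*_{\pi}$. Writing $\ell=\#\{i:d_i=1\}$, the hypothesis $\sum d_i=2n$ is equivalent to $\sum_{d_i\ge 3}(d_i-2)=\ell$ (subtract $2$ from every term and split by size). Thus if $d_0=2$ then all $d_i=2$ and $C_n$ realizes $\pi$ (this is the base shape, where $n\ge 3$ is used); and if $d_0\ge 3$ then $\ell\ge 1$, so $\pi$ has a leaf $v$, say with a neighbour of degree $d\ge 2$. Deleting $v$ and decrementing that neighbour yields a positive nonincreasing sequence $\pi'$ on $n-1$ vertices with $\sum\pi'=2(n-1)$. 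By the induction hypothesis $\pi'$ has a connected unicyclic realization $G'$, and attaching a new pendant vertex to a vertex of $G'$ of degree $d-1$ restores degree $d$ there and produces a connected graph on $n$ vertices with $n$ edges and degree sequence $\pi$, i.e.\ a unicyclic realization.

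The step I expect to be the \textbf{main obstacle} is guaranteeing that the sequences arising along the way are actually realizable, equivalently that $\pi$ itself is graphic. A positive nonincreasing sequence with $\sum d_i=2n$ need not admit any simple graph at all (for instance $(3,3,1,1)$ with $n=4$ violates the Erd\H{o}s--Gallai condition at $k=2$), so the induction cannot proceed by blind attachment and must be kept inside the class of graphic sequences; this is exactly where $\pi$ has to be read as a genuine degree sequence, and where one checks that the boundary case $n=3$ is reached only at $(2,2,2)$. The clean way I would discharge the \emph{connectedness} part is a $2$-switch argument: start from any simple realization $G$ of $\pi$; since $\tfrac12\sum d_i=n$ edges on $n$ vertices cannot form a forest, a disconnected $G$ must contain a cycle in some component, and replacing a cycle edge $uv$ together with an edge $xy$ of a different component by $ux,vy$ strictly lowers the number of components while preserving every degree. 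Iterating reaches a connected realization, which, having exactly $n$ edges, is unicyclic. The only genuinely external input is the existence of that first realization, i.e.\ the graphicness of $\pi$; granting it, the edge-count bookkeeping and the $2$-switch are routine.
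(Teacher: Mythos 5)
The paper offers no proof of Lemma \ref{Lemma4.2} at all --- it is quoted from Zhang's paper with a citation --- so your attempt has to stand on its own, and it does not close: the difficulty you flag at the end is not a repairable obstacle in your induction but a refutation of the statement as transcribed. Your necessity direction is correct and routine. For sufficiency, however, your own example $(3,3,1,1)$ satisfies \emph{every stated hypothesis} (positive, nonincreasing, even sum $8=2n$, $n=4\geq 3$) and yet is not unicyclic graphic --- indeed not graphic at all, since on four vertices each degree-$3$ vertex is adjacent to all others, forcing the two remaining degrees to be at least $2$. Structurally, a unicyclic graph contains a cycle of length at least $3$, so at least three of its degrees are $\geq 2$; hence $d_2\geq 2$ is a necessary condition invisible in the statement, and $(4,2,1,1)$ is another counterexample. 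Your leaf-deletion induction therefore cannot be completed under the stated hypotheses: from $(3,3,1,1)$ it would pass to $(3,2,1)$, which again satisfies all hypotheses at $n=3$ and again is not realizable. Moreover, ``deleting $v$ and decrementing that neighbour'' is not well defined at the level of sequences --- there is no graph yet, so no neighbour; you must say which \emph{entry} is decremented. Your fallback, that $\pi$ ``has to be read as a genuine degree sequence,'' changes the lemma: what your $2$-switch argument correctly proves (and that part is sound --- a disconnected realization with $n$ edges has a cycle edge in some component, and swapping it against an edge of another component preserves all degrees and strictly reduces the number of components) is the different statement that a \emph{graphic} sequence with $\sum_{i=0}^{n-1} d_i=2n$ has a connected, hence unicyclic, realization. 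Leaving ``the existence of that first realization'' as an unproved external input means you have not proved an ``if and only if'' whose entire content is that the arithmetic condition guarantees realizability.

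The repair is to add $d_2\geq 2$ to the hypotheses; then your intended induction closes once the decrement is made canonical. If $d_0=2$ then $\pi=(2,\ldots,2)$ and $C_n$ works; if $d_0\geq 3$ then your identity $\sum_{d_i\geq 3}(d_i-2)=\ell$ gives a trailing entry equal to $1$, and you should pass to $\pi'=(d_0-1,d_1,\ldots,d_{n-2})$, i.e.\ always decrement the \emph{largest} entry. Then $\pi'$ is positive, sums to $2(n-1)$, and still has three entries $\geq 2$ (namely $d_0-1$, $d_1$, $d_2$), so the induction hypothesis yields a unicyclic $G'$ containing a vertex of degree $d_0-1$, and attaching a pendant vertex there realizes $\pi$; the base case $n=3$ is exactly $(2,2,2)$. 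This version generates graphicness internally instead of assuming it, making the $2$-switch route unnecessary. It is worth noting that the corrected hypothesis is also what makes the paper's construction of $\mathcal{U}^*_{\pi}$ in Section 4 well defined (for $\pi=(4,2,1,1)$, with $d_1=2$, the prescribed ``triangle with $d_0-2$ hanging paths'' does not exist on $4$ vertices), so the flaw you uncovered propagates to Lemma \ref{Lemma4.4} as stated and is genuinely worth recording rather than hedging.
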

Let $G$ be a connected graph and $w$ be a vertex of $G$. Denote by $G(k,s)$ the graph obtain from $G\cup P_k\cup P_s$ by adding two edges between $w$ and end vertices of $P_k$ and $P_s$.
\begin{lemma}\label{Lemma4.3} \cite{Xue Jie1}Let $G(k,s)$ be the graph defined above with $k\geq s+2$. If $0\leq \alpha<1$ and $\rho(G(k,s))\geq2$, then
$$\rho(G(k,s))<\rho(G(k-1,s+1)).$$
\end{lemma}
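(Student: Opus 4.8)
The transformation taking $G(k,s)$ to $G(k-1,s+1)$ is a single pendant-edge relocation (detach the free end of the longer path and reattach it to the free end of the shorter one), so one is tempted to apply Lemma \ref{Lemma2.1} directly. However, the inequality $x_{v_s}\ge x_{u_{k-1}}$ that Lemma \ref{Lemma2.1} would require need not hold in general, so a purely local switch is inconclusive and a more global comparison is needed. The plan is to eliminate the two pendant paths by a Schur-complement (continued-fraction) reduction, turning both spectral problems into eigenvalue problems for $A_\alpha(G)$ with a single modified diagonal entry at $w$, and then to compare the two modifications.

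Write the longer path as $w,u_1,\dots,u_k$ and the shorter as $w,v_1,\dots,v_s$, put $\rho=\rho(A_\alpha(G(k,s)))\ge 2$, and let $X$ be its positive eigenvector. Eliminating $u_t,u_{t-1},\dots,u_2$ from the free end inward via the eigen-equations at the pendant vertex and the interior degree-$2$ vertices yields $x_{u_1}=\frac{1-\alpha}{\phi_{t-1}(x)}\,x_w$, where $\phi_1(x)=x-\alpha$ and $\phi_{j+1}(x)=(x-2\alpha)-\frac{(1-\alpha)^2}{\phi_j(x)}$; these stay positive for $x\ge 2$, so the reduction has no poles. Substituting this relation (and its analogue for $v_1$) into the equation at $w$, and noting that $w$ has the same degree $d_G(w)+2$ in both $G(k,s)$ and $G(k-1,s+1)$ while every other vertex of $G$ retains its degree, one sees that $\rho(A_\alpha(G(k,s)))$ is the largest $x\ge 2$ for which $x$ is the Perron root (the restricted eigenvector is positive) of the matrix $M_{k,s}(x)$ obtained from $A_\alpha(G)$ by replacing its $(w,w)$ entry with $\alpha(d_G(w)+2)+c_k(x)+c_s(x)$, where $c_t(x):=(1-\alpha)^2/\phi_{t-1}(x)$ is the contribution of a pendant path on $t$ vertices. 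The only difference between $G(k,s)$ and $G(k-1,s+1)$ is therefore $c_k+c_s$ versus $c_{k-1}+c_{s+1}$.

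The heart of the argument is the inequality $c_{k-1}(x)+c_{s+1}(x)>c_k(x)+c_s(x)$ for $x\ge 2$ and $k\ge s+2$, i.e. that the increments $c_t-c_{t-1}$ are positive and strictly decreasing in $t$. Writing $c_t(x)=(1-\alpha)\,g_{t-1}(x)/g_t(x)$, where $g_0=1$, $g_1=\frac{x-\alpha}{1-\alpha}$ and $g_{t+1}=\frac{x-2\alpha}{1-\alpha}\,g_t-g_{t-1}$ is a Chebyshev-type sequence, the hypothesis $x\ge 2$ forces $\frac{x-2\alpha}{1-\alpha}\ge 2$ and $\frac{x-\alpha}{1-\alpha}\ge 2$, which makes every $g_t>0$ and strictly increasing. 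A Wronskian computation for this three-term recurrence shows that $g_{t-1}^2-g_tg_{t-2}$ is independent of $t$ and equals $W=1+\frac{\alpha(x-\alpha)}{(1-\alpha)^2}>0$, whence
$$c_t(x)-c_{t-1}(x)=(1-\alpha)\,\frac{g_{t-1}^2-g_tg_{t-2}}{g_{t-1}g_t}=\frac{(1-\alpha)W}{g_{t-1}g_t},$$
which is strictly decreasing in $t$ because $g_{t-1}g_t$ is increasing. Since $s+1\le k-1$ gives $g_sg_{s+1}<g_{k-1}g_k$, comparing the increments at $t=s+1$ and $t=k$ yields $c_{s+1}-c_s>c_k-c_{k-1}$, which is exactly the desired inequality. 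I expect this discrete concavity of $c_t$ in the path length — driven by the constancy of the Wronskian and genuinely requiring $\rho\ge 2$ — to be the main obstacle, and it is precisely where the hypothesis $\rho(A_\alpha(G(k,s)))\ge 2$ is used.

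Finally, increasing a single diagonal entry of the irreducible nonnegative symmetric matrix $A_\alpha(G)$ strictly increases its Perron root (the derivative is the squared Perron-coordinate at $w$), so $M_{k-1,s+1}(x)$ has strictly larger Perron root than $M_{k,s}(x)$ for every $x\ge 2$. Each $c_t(x)$ is decreasing in $x$, so the map $x\mapsto\rho(M_{k-1,s+1}(x))-x$ is strictly decreasing and vanishes exactly at $\rho(A_\alpha(G(k-1,s+1)))$; since it strictly dominates the corresponding map for $G(k,s)$, it is strictly positive at $x=\rho(A_\alpha(G(k,s)))$ (where the latter vanishes). Therefore $\rho(A_\alpha(G(k-1,s+1)))>\rho(A_\alpha(G(k,s)))$, as claimed. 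Lemma \ref{Lemma2.8} underwrites these monotonicity statements.
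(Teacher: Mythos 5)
The paper itself contains no proof of Lemma \ref{Lemma4.3}: it is imported verbatim from \cite{Xue Jie1}, where it is obtained by working directly with the Perron vector of $G(k,s)$ and an edge-relocation argument in the spirit of Lemma \ref{Lemma2.1}. Your argument is therefore a genuinely different, self-contained route. You eliminate both pendant paths by a continued-fraction (Schur-complement) reduction, so that a pendant path on $t$ vertices contributes a scalar $c_t(x)$ to the diagonal entry at $w$ of a reduced matrix acting only on $V(G)$, and the lemma becomes the discrete strict concavity $c_{k-1}(x)+c_{s+1}(x)>c_k(x)+c_s(x)$ for $x\ge 2$, $k\ge s+2$. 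Your verification of that inequality is correct and well targeted: with $g_0=1$, $g_1=\frac{x-\alpha}{1-\alpha}$, $g_{t+1}=\frac{x-2\alpha}{1-\alpha}g_t-g_{t-1}$, the Wronskian $g_{t-1}^2-g_tg_{t-2}$ is indeed constant and equals $W=1+\frac{\alpha(x-\alpha)}{(1-\alpha)^2}>0$; the hypothesis $x\ge 2$ is exactly what makes the $g_t$ positive and strictly increasing, so the increments $c_t-c_{t-1}=\frac{(1-\alpha)W}{g_{t-1}g_t}$ are positive and strictly decreasing; and $k\ge s+2$ is exactly what gives $g_sg_{s+1}<g_{k-1}g_k$. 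This isolates the role of both hypotheses more transparently than the eigenvector-comparison proof, at the cost of handling a fixed-point (nonlinear eigenvalue) formulation.

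Two repairs are needed. (i) Cosmetic: your $\phi$-indices are off by one; the contribution of a pendant path on $t$ vertices is $(1-\alpha)^2/\phi_t(x)=(1-\alpha)g_{t-1}/g_t$, not $(1-\alpha)^2/\phi_{t-1}(x)$ (your $g$-formulas, which carry the actual computation, are the consistent ones). (ii) Substantive: the assertion that $x\mapsto\rho(M_{k-1,s+1}(x))-x$ ``vanishes exactly at $\rho(A_\alpha(G(k-1,s+1)))$'' is not free, because the restriction-of-the-Perron-vector argument yields this only if one already knows $\rho(A_\alpha(G(k-1,s+1)))\ge 2$, which is not available a priori --- it is essentially part of what is being proved (containing two long pendant paths does not by itself force spectral radius at least $2$). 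You must supply the converse (lifting) direction: since $\rho(M_{k-1,s+1}(x))-x$ is continuous, strictly decreasing, positive at $x_0=\rho(A_\alpha(G(k,s)))$ and tends to $-\infty$ as $x\to\infty$, it has a unique zero $x^*>x_0$; back-substituting along the two paths (legitimate because every $\phi_j(x^*)>0$ for $x^*\ge 2$) extends the positive eigenvector of $M_{k-1,s+1}(x^*)$ to a positive vector $Z$ with $A_\alpha(G(k-1,s+1))Z=x^*Z$, and Perron--Frobenius (a positive eigenvector of a nonnegative irreducible matrix belongs to its spectral radius) then gives $x^*=\rho(A_\alpha(G(k-1,s+1)))>\rho(A_\alpha(G(k,s)))$. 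Note finally that your argument tacitly uses $s\ge 1$, both so that $w$ has degree $d_G(w)+2$ in each graph and so that $g_{s-1}$ is defined in the Wronskian step; this is guaranteed by the paper's definition of $G(k,s)$. With (ii) spelled out, your proof is complete.
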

\begin{lemma}\label{Lemma4.4} Let $\pi=(d_0,d_1,\ldots,d_{n-1})$ be a positive nonincreasing integer sequence with $d_1=2$ and $\sum\limits_{i=0}^{n-1}d_i=2n$. If $0\leq \alpha<1$, then $\rho(A_{\alpha}(\mathcal{U}^*_{\pi}))= \max\{\rho(A_{\alpha}(H))\mid \mbox{H is unicyclic and}~H\in \mathcal{G}_{\pi}\}$.
\end{lemma}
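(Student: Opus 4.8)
The plan is to first fix the coarse shape common to all competitors in $\mathcal{G}_\pi$, and then to transform an arbitrary maximizer into $\mathcal{U}^*_\pi$ by two local moves, each strictly increasing the $A_\alpha$-spectral radius. If $d_0=2$, then $\pi=(2,2,\ldots,2)$, the cycle $C_n$ is the only unicyclic realization, and $\mathcal{U}^*_\pi=C_n$; so assume $d_0\geq 3$. Because $\pi$ is nonincreasing with $d_1=2$ and $\sum_i d_i=2n$, a direct count gives exactly one vertex $w$ of degree $d_0$, exactly $d_0-2$ leaves, and all remaining vertices of degree $2$. In any unicyclic $H\in\mathcal{G}_\pi$ a pendant tree can be attached to the cycle only at a vertex of degree at least $3$; since $w$ is the unique such vertex, $w$ must lie on the cycle (otherwise the cycle carries no pendant trees and $H=C_n$, contradicting $d_0\geq 3$) and all $d_0-2$ pendant paths must emanate from $w$. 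Consequently every $H\in\mathcal{G}_\pi$ is described, up to isomorphism, by the cycle length $\ell\geq 3$ together with the lengths of the $d_0-2$ pendant paths at $w$, which form a partition of $n-\ell$ into $d_0-2$ positive parts; here $\mathcal{U}^*_\pi$ is exactly the realization with $\ell=3$ and almost equal path lengths.

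Next I would fix a maximizer $H$ and let $X$ be a Perron eigenvector of $A_\alpha(H)$. As $d_0\geq 3$, $H$ is not regular, so $\rho(A_\alpha(H))>2$ by Lemma \ref{Lemma2.9}; this strict inequality feeds the strictness of every improvement below. The first move balances the pendant paths: if two pendant paths at $w$ had lengths differing by at least $2$, then viewing these two paths as the paths $P_k,P_s$ attached at the common vertex $w$ of the remaining graph $G$, Lemma \ref{Lemma4.3} would yield a graph in $\mathcal{G}_\pi$ with strictly larger $A_\alpha$-spectral radius, contradicting maximality. Hence in $H$ any two pendant path lengths differ by at most $1$.

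The decisive step is to force $\ell=3$. Suppose $\ell\geq 4$, let $c$ be a cycle-neighbour of $w$ and $c'$ the other cycle-neighbour of $c$; then $d_H(c)=2$ and $wc'\notin E(H)$ because $\ell\geq 4$. Define $H'\in\mathcal{G}_\pi$ by removing $c$ from the cycle (delete $wc,cc'$ and add $wc'$, so the cycle shrinks to length $\ell-1$) and appending $c$ as a new leaf to the end of a shortest pendant path at $w$; one checks at once that $H'$ is unicyclic with degree sequence $\pi$. I claim $\rho(A_\alpha(H'))>\rho(A_\alpha(H))$, which contradicts maximality and hence forces $\ell=3$. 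To prove the claim I would follow the template of the proof of Lemma \ref{Lemma2.10}: let $Z$ be a Perron eigenvector of the shorter-cycle graph $H'$, regard it as a strictly positive vector on $V(H)$ via the natural identification of vertex sets, and verify $A_\alpha(H)Z<\rho(A_\alpha(H'))Z$ coordinatewise, so that Lemma \ref{Lemma2.8} yields $\rho(A_\alpha(H))<\rho(A_\alpha(H'))$.

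The main obstacle is precisely this last coordinatewise verification. Unlike Lemma \ref{Lemma2.10}, the passage $H\to H'$ is not a plain subdivision: the neighbourhood of $w$ changes ($c$ is replaced by $c'$) and the relocated vertex $c$ switches from a degree-$2$ cycle vertex to a degree-$1$ leaf, so the diagonal degree entries at $c$ and at the former leaf change as well. The inequalities at $w$, at $c'$, and along the lengthened pendant path must therefore be checked individually, and it is here that one must exploit $\rho(A_\alpha(H'))>2$ to secure the strict inequality. Once $\ell=3$ and the pendant path lengths are almost equal, the resulting graph is isomorphic to $\mathcal{U}^*_\pi$ (which indeed carries a BFS-ordering by Lemma \ref{Lemma4.1}), and the proof is complete.
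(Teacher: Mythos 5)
Your structural analysis (a unique vertex $w$ of degree $d_0$, exactly $d_0-2$ leaves, hence every unicyclic realization of $\pi$ is a cycle of some length $\ell$ carrying $d_0-2$ pendant paths at $w$) and your balancing step via Lemma \ref{Lemma4.3} are correct and agree with the paper. The gap is in the decisive step, forcing $\ell=3$. You define the right move $H\to H'$ (delete a degree-two cycle vertex $c$ from the cycle, reattach it as a leaf at the end of a shortest pendant path), but you never prove $\rho(A_\alpha(H'))>\rho(A_\alpha(H))$: you propose to check $A_\alpha(H)Z<\rho(A_\alpha(H'))Z$ coordinatewise for a Perron vector $Z$ of $H'$ and then invoke Lemma \ref{Lemma2.8}, and you yourself flag this check as ``the main obstacle,'' listing the problematic coordinates ($w$, $c'$, the lengthened path) without resolving any of them. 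As written this is a plan, not a proof; and the verification is genuinely delicate, because your single move changes degrees at several vertices at once and there is no canonical identification of $V(H)$ with $V(H')$ under which all the local inequalities come out with the same sign --- handling exactly this kind of difficulty is what occupies the entire proof of Lemma \ref{Lemma2.10}.

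The missing step has a short proof, and it is precisely the paper's route: factor your move into two operations, each already covered. First contract one edge of the cycle of $H$ to get an $(n-1)$-vertex graph $H_1$; since the cycle of $H$ is an internal path (cycle case) in the sense of Definition 1, $H$ is obtained from $H_1$ by subdividing a cycle edge, so Lemma \ref{Lemma2.10} gives $\rho(A_\alpha(H))<\rho(A_\alpha(H_1))$ (the hypothesis $\rho>2$ holds because $H$ is connected, contains a cycle, and is not regular). Then append one new pendant vertex at the end of a shortest pendant path of $H_1$ to obtain $H_2\in\mathcal{G}_\pi$ on $n$ vertices; since $A_\alpha(H_2)$ restricted to $V(H_1)$ dominates $A_\alpha(H_1)$ entrywise, $\rho(A_\alpha(H_1))\leq\rho(A_\alpha(H_2))$. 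Composing the two gives $\rho(A_\alpha(H))<\rho(A_\alpha(H_2))$, contradicting maximality whenever $\ell\geq4$. With that substitution for your one-step argument, your proof closes and becomes essentially the paper's proof.
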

\begin{proof}Note that there must exist a unicyclic graph $H$ such that $H\in \mathcal{G}_{\pi}$ by Lemma \ref{Lemma4.2}. If $d_0=2$, then $H\cong C_n$. So we assume that $d_0\geq3$. Then $H$ must be the graph obtain from $C_p\cup\bigcup\limits_{i=1}^{d_0-2}P_{p_i}$ by adding $d_0-2$ edges between $v_0$ and the end vertices of $P_{p_i}$, $1\leq i\leq d_0-2$. Then the following claims must hold.

\noindent{\textbf{Claim 1.}} $p=3$. Otherwise, we can construct a new graph from $H$ as follows: The first step, constructing a graph $H_1$ by contracting an edge of $C_p$ in $H$, then $\rho(A_\alpha(H))<\rho(A_\alpha(H_1))$ by Lemma \ref{Lemma2.10}; The second step, constructing a graph $H_2$ by adding a pendant edge to a pendant vertex of $H_1$, then $\rho(A_\alpha(H_1))\leq\rho(A_\alpha(H_2))$ since $H_2\subset H_1$, thus $\rho(A_\alpha(H))<\rho(A_\alpha(H_2))$, which contradicts $H$ having the largest $A_\alpha$-spectral radius in $\mathcal{G}_{\pi}$ since $H_2\in \mathcal{G}_{\pi}.$

\noindent{\textbf{Claim 2.}} $|p_i-p_j|\leq1$. Otherwise, suppose that there exist two paths $P_{p_s}$ and $P_{p_t}$ such that $|p_s-p_t|\geq2$, without loss of generality, assume that $p_s\geq p_t+2$, thus $H$ can be expressed as $G(p_s,p_t)$. Note that $\rho(A_\alpha(H))>2$. Then by Lemma \ref{Lemma4.3}, we have $\rho(A_\alpha(G(p_s,p_t)))<\rho(A_\alpha(G(p_s-1,p_t+1)))$ and $G(p_s-1,p_t+1)\in \mathcal{G}_{\pi}$, which contradicts $H$ having the largest $A_\alpha$-spectral radius.

By the above Claims, we immediately obtain that $H$ must be $\mathcal{U}^*_{\pi}$.
\end{proof}
\begin{lemma}\label{Lemma4.5} \cite{V. Nikiforov2}Let $G$ be a graph with maximal degree $\Delta$. If $\alpha\in[0,1/2]$, then $$\rho(A_{\alpha}(G))\geq \alpha(\Delta+1).$$If $\alpha\in[1/2,1)$, then $$\rho(A_{\alpha}(G))\geq \alpha\Delta+1-\alpha.$$
\end{lemma}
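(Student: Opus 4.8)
The plan is to bound $\rho(A_\alpha(G))$ from below by the spectral radius of a small principal submatrix built around a vertex of maximum degree, and then read off the two estimates according to the sign of $2\alpha-1$. Since $\alpha\in[0,1)$, the matrix $A_\alpha(G)$ is nonnegative and symmetric (its diagonal entries $\alpha d_G(\cdot)$ and its off-diagonal entries $1-\alpha$ are all nonnegative), so by Perron--Frobenius its spectral radius equals its largest eigenvalue and satisfies $\rho(A_\alpha(G))=\max_{\|x\|=1}x^{T}A_\alpha(G)x$. Two consequences drive the proof: for any principal submatrix $M$ of $A_\alpha(G)$ one has $\rho(M)\le\rho(A_\alpha(G))$, and if $B$ is a nonnegative symmetric matrix with $B\le M$ entrywise then $\rho(B)\le\rho(M)$. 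We may assume $G$ has an edge, i.e. $\Delta\ge1$, since otherwise $\rho=0$ and no positive lower bound is claimed.

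For $\alpha\in[0,1/2]$ I would fix a vertex $v$ with $d_G(v)=\Delta$ and take $S=\{v\}\cup N_G(v)$, so $|S|=\Delta+1$. The principal submatrix $M=A_\alpha(G)[S]$ dominates entrywise the matrix $B$ of the bare star: diagonal $\alpha\Delta$ at $v$, diagonal $\alpha$ at each of the $\Delta$ leaves, weight $1-\alpha$ on each edge $vw$, and $0$ between leaves (here one only uses $d_G(w)\ge1$ for $w\in N_G(v)$ and that the leaf--leaf entries are nonnegative). A Perron vector of $B$ is constant on the leaves, and the eigenvalue equation collapses to
$$\lambda^{2}-\alpha(\Delta+1)\lambda+(2\alpha-1)\Delta=0.$$
When $\alpha\le 1/2$ the constant term $(2\alpha-1)\Delta$ is nonpositive, so the smaller root is $\le 0$; as the roots sum to $\alpha(\Delta+1)\ge0$, the larger root is at least $\alpha(\Delta+1)$. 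Hence $\rho(A_\alpha(G))\ge\rho(M)\ge\rho(B)\ge\alpha(\Delta+1)$, giving the first bound.

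For $\alpha\in[1/2,1)$ the same star is too weak, because the larger root of the quadratic above already lies below $\alpha\Delta+1-\alpha$; the estimate must therefore come from a different submatrix. The natural device is a single edge. Choosing $w\in N_G(v)$ and taking the $2\times2$ principal submatrix $\left(\begin{smallmatrix}\alpha\Delta & 1-\alpha\\ 1-\alpha & \alpha d_G(w)\end{smallmatrix}\right)$, its larger eigenvalue equals $\tfrac{\alpha(\Delta+d_G(w))}{2}+\sqrt{\big(\tfrac{\alpha(\Delta-d_G(w))}{2}\big)^{2}+(1-\alpha)^{2}}$, which is at least $\tfrac{\alpha(\Delta+d_G(w))}{2}+(1-\alpha)$. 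This is exactly where the additive constant $1-\alpha$ enters, and it produces the target value $\alpha\Delta+1-\alpha$ precisely when the selected edge joins two vertices of degree $\Delta$.

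I expect this last point to be the main obstacle. A vertex of maximum degree need not be adjacent to another vertex of degree $\Delta$, and then the crude edge estimate only yields $\tfrac{\alpha(\Delta+d_G(w))}{2}+(1-\alpha)$, which can fall short of $\alpha\Delta+1-\alpha$ (for instance the star $K_{1,\Delta}$ itself has $A_\alpha$-spectral radius below $\alpha\Delta+1-\alpha$ when $\alpha>1/2$). The delicate part of the argument for $\alpha\ge1/2$ is thus to justify invoking an edge between two maximum-degree vertices --- which is immediate for regular graphs and, more generally, whenever some maximum-degree vertex has a maximum-degree neighbor --- or else to sharpen the neighborhood estimate so as to cover the remaining configurations, combining the edge bound, the star bound, and the monotonicity $\rho(B)\le\rho(M)\le\rho(A_\alpha(G))$. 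Pinning down exactly which structural hypothesis on $G$ makes the clean value $\alpha\Delta+1-\alpha$ attainable is, in my view, where the real work lies.
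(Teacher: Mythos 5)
Your first half is correct, and it is essentially the argument behind the cited result (note that the paper itself gives no proof of this lemma: it is quoted from reference [4], Nikiforov's ``Merging the $A$- and $Q$-spectral theories''). The comparison $A_\alpha(K_{1,\Delta})\le A_\alpha(G)[S]$ entrywise on $S=\{v\}\cup N_G(v)$, the reduction of the star's Perron problem to the quadratic $\lambda^2-\alpha(\Delta+1)\lambda+(2\alpha-1)\Delta=0$, and the sign argument on the constant term for $\alpha\le 1/2$ are all sound and give $\rho(A_\alpha(G))\ge\alpha(\Delta+1)$.

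The obstacle you hit in the second half, however, is not something you can remove by a cleverer choice of submatrix: the second inequality is \emph{false as stated}, and your parenthetical remark about stars is already a counterexample rather than a difficulty to be circumvented. Concretely, evaluating the star quadratic $q(\lambda)=\lambda^2-\alpha(\Delta+1)\lambda+(2\alpha-1)\Delta$ at $\mu=\alpha\Delta+1-\alpha$ gives $q(\mu)=(2\alpha-1)(1-\alpha)(\Delta-1)>0$ for $\alpha\in(1/2,1)$ and $\Delta\ge 2$, and since $\mu>\alpha(\Delta+1)/2$ (the abscissa of the parabola's minimum), $\mu$ lies strictly above the larger root; hence $\rho(A_\alpha(K_{1,\Delta}))<\alpha\Delta+1-\alpha$. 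For instance $G=P_3=K_{1,2}$ with $\alpha=3/4$ has $\rho(A_\alpha(G))=(9+\sqrt{17})/8\approx 1.64<1.75$. So no structural hypothesis needs to be ``pinned down''; the constant needs to be corrected. The bound that the star comparison actually yields for $\alpha\in[1/2,1)$ --- and this is what the cited source establishes --- is $\rho(A_\alpha(G))\ge\alpha\Delta+\frac{(1-\alpha)^2}{\alpha}$: plugging $\alpha\Delta+\frac{(1-\alpha)^2}{\alpha}$ into the same quadratic gives $\frac{(1-\alpha)^2(1-2\alpha)}{\alpha^2}\le 0$, so the star's Perron root is at least this value, and this constant is sharp as $\Delta\to\infty$. (Your stated value $\alpha\Delta+1-\alpha$ is recoverable from your $2\times 2$ edge submatrix only under the extra hypothesis that some vertex of degree $\Delta$ has a neighbor of degree $\Delta$, e.g.\ for regular graphs.) Finally, the misquotation is harmless for this paper's purposes: the only place Lemma 4.5 is invoked, in the proof of Lemma 4.6, requires $\rho(A_\alpha(G))>\alpha d_0-\frac{d_0(1-\alpha)(1+2\alpha)}{d_1}$, which already follows from the trivial diagonal bound $\rho(A_\alpha(G))\ge\alpha\Delta$, hence also from the corrected statement.
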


\begin{lemma}\label{Lemma4.6} Let $\pi=(d_0,d_1,\ldots,d_{}n-1)$ be a positive nonincreasing integer sequence with $d_1\geq3$ and $\sum\limits_{i=0}^{n-1}d_i=2n$. If $0\leq \alpha<1$, then $\rho(A_{\alpha}(\mathcal{U}^*_{\pi}))= \max\{\rho(A_{\alpha}(H))\mid\mbox{H is unicyclic and}~H\in \mathcal{G}_{\pi}\}$.
\end{lemma}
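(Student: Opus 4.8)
The plan is to follow the pattern of Theorem~\ref{Theorem3.3}: first reduce to a graph carrying a BFS-ordering, then show that such a graph must coincide with $\mathcal{U}^*_{\pi}$, the extra work over the tree case being to locate the unique cycle. By Lemma~\ref{Lemma4.2} the family of unicyclic graphs in $\mathcal{G}_{\pi}$ is nonempty and finite, so a maximizer $H$ exists. By Theorem~\ref{Theorem2.7} and Corollary~\ref{Corollary2.7}, $H$ admits a BFS-ordering $v_0\prec v_1\prec\cdots\prec v_{n-1}$ aligned with a Perron vector $X$, so that $x_{v_0}\geq x_{v_1}\geq\cdots\geq x_{v_{n-1}}$, with nondecreasing heights and nonincreasing degrees along the order. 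Let $C$ be the unique cycle of $H$. A key structural remark is that, because $\sum_i d_i=2n$, every connected graph in $\mathcal{G}_{\pi}$ is automatically unicyclic by Lemma~\ref{Lemma4.2}; hence any degree-preserving modification of $H$ that keeps the graph connected stays inside the competition, and a strict increase of $\rho(A_\alpha)$ would contradict maximality. This is exactly what makes the $2$-switch of Lemma~\ref{Lemma2.3} the natural engine.

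The core is to prove that $C$ is the triangle on $\{v_0,v_1,v_2\}$, which I would do in three claims, each by exhibiting a $2$-switch (Lemma~\ref{Lemma2.3}) that strictly raises $\rho(A_\alpha)$ whenever the claim fails. First, $v_0\in C$: if the maximum-entry vertex were off the cycle, I would take the vertex $w$ of $C$ nearest $v_0$ and reroute a cycle edge incident to $w$ so that it becomes incident to $v_0$; since $x_{v_0}$ dominates every other entry, the inequalities required by Lemma~\ref{Lemma2.3} hold and the switch is strict. Second, $C$ is a triangle: if $|C|\geq 4$ I would introduce a chord at $v_0$ by deleting two suitable cycle edges and adding the chord together with the edge demanded by the $2$-switch, the ordering $x_{v_0}\geq x_{v_i}$ again supplying the hypotheses. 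Third, the two remaining cycle vertices are $v_1,v_2$: if some cycle vertex had a smaller entry than a noncycle neighbor of $v_0$, a switch toward the higher-entry neighbor increases $\rho(A_\alpha)$. Wherever a bare $2$-switch would only give equality, I would invoke the equality analysis in Lemma~\ref{Lemma2.3} (forcing equal eigenvector entries) together with Lemma~\ref{Lemma2.5} (equal entries force equal degrees) to reach a contradiction, and wherever a switch would disconnect $H$ I would instead use the relocation of Lemma~\ref{Lemma2.4}.

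With $C=v_0v_1v_2$ pinned down, the finish parallels the tree case. Deleting $v_1v_2$ leaves a spanning tree $T=H-v_1v_2$ that still respects the BFS-ordering of $H$, its degree sequence being that of $H$ with $d(v_1),d(v_2)$ each decreased by one. The forced layer-by-layer filling that makes a BFS-graph unique (the mechanism behind Lemma~\ref{Lemma3.1} and the explicit construction of $\mathcal{U}^*_{\pi}$) then determines how every remaining vertex hangs below $v_0,v_1,v_2$; re-adding $v_1v_2$ recovers exactly $\mathcal{U}^*_{\pi}$. Hence $H\cong\mathcal{U}^*_{\pi}$ and $\rho(A_\alpha(\mathcal{U}^*_{\pi}))=\max\{\rho(A_\alpha(H)):H\text{ unicyclic},\ H\in\mathcal{G}_{\pi}\}$. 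The hypothesis $d_1\geq 3$ is used to guarantee at least two vertices of degree $\geq 3$, so that the genuine extremal graph is the BFS-graph $\mathcal{U}^*_{\pi}$ with the triangle absorbed at the top, rather than the triangle-with-hanging-paths graph of the $d_1=2$ regime treated in Lemma~\ref{Lemma4.4}.

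The hard part will be the second paragraph, and within it the triangle claim. Unlike the tree case, a BFS-ordering does not determine a unicyclic graph up to isomorphism, so the length and position of $C$ must be fixed purely by the Perron vector. The delicate points are: checking in each configuration that the two edges created by the $2$-switch are genuinely absent and that the resulting graph stays connected (so that, by Lemma~\ref{Lemma4.2}, it is again a legitimate unicyclic competitor); and disposing of the equality cases of Lemma~\ref{Lemma2.3}, where a switch preserves $\rho(A_\alpha)$ and one must instead argue via forced equalities of entries and then of degrees. These bookkeeping steps, rather than any single inequality, are where the real effort lies.
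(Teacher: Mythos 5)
Your overall plan coincides with the paper's strategy: take a maximizer, equip it with a BFS-ordering via Theorem~\ref{Theorem2.7} and Corollary~\ref{Corollary2.7}, use degree-preserving $2$-switches (Lemma~\ref{Lemma2.3}) to force the unique cycle to become the triangle at the top of the ordering, and then identify the resulting BFS-graph with $\mathcal{U}^*_{\pi}$. The gap is precisely at the point you dismiss as bookkeeping: the equality cases cannot be closed by ``the equality analysis of Lemma~\ref{Lemma2.3} plus Lemma~\ref{Lemma2.5}.'' Take the paper's Case~3 configuration: the cycle passes through the root $v_{01}$, no two layer-one vertices are adjacent, so the cycle uses an edge $v_{1k}v_{2l}$ with $k\geq 2$, while $d_1\geq 3$ supplies a non-cycle edge $v_{11}v_{2t}$. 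The BFS-ordering gives only the weak inequalities $x_{v_{11}}\geq x_{v_{2l}}$ and $x_{v_{1k}}\geq x_{v_{2t}}$. If both happen to be equalities, the switch yields $\rho(A_{\alpha}(G'))=\rho(A_{\alpha}(G))$, which is no contradiction at all ($G'$ is simply another maximizer), and Lemma~\ref{Lemma2.5} yields only $d(v_{11})=d(v_{2l})$ and $d(v_{1k})=d(v_{2t})$, which is perfectly consistent (all four degrees may equal $d_1$). To exclude this, the paper must do genuine work: it propagates the equalities through the eigenvalue equations at $v_{2t}$, $v_{01}$ and $v_{11}$, and combines the resulting identity with the lower bound of Lemma~\ref{Lemma4.5} to force $\rho(A_{\alpha}(G))=d_1=(1+2\alpha)d_0>d_0$, contradicting $\rho(A_{\alpha}(G))\leq d_1\leq d_0$. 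Your proposal contains no substitute for this spectral argument, and it is the crux of the proof (it is also needed verbatim in the paper's Case~4).

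There is a second concrete flaw in your first claim ($v_0\in C$). The switch you describe must, by Lemma~\ref{Lemma2.3}, satisfy two inequalities, and one of them compares a cycle vertex of height at least two with a layer-one neighbor $y$ of $v_0$; the BFS-ordering gives exactly the reverse inequality ($x_y\geq x_w$ for $w$ deeper than $y$), so the hypotheses of Lemma~\ref{Lemma2.3} generally fail for that switch. Moreover, when the cycle's closest vertex to $v_0$ lies at height one, the edge your switch would need to add is already present, so no such switch exists. The paper handles these situations differently: in Case~4 the switch creates the edge $v_{11}v_{12}$ inside layer one (again relying on the spectral claim for strictness), and in Case~5, where the cycle sits strictly lower, no single strict switch is available at all; the paper pushes the cycle up one layer at a time through non-strict switches, $\rho(A_{\alpha}(G))\leq\rho(A_{\alpha}(G'_5))$, and terminates the iteration through the strict cases. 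Your plan has no analogue of this iteration, so as written the argument does not close.
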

\begin{proof}Note that there must exist a unicyclic graph $H$ such that $H\in \mathcal{G}_{\pi}$ by Lemma \ref{Lemma4.2}. Let $G$ be the graph with the largest $A_\alpha$-spectral radius in $\mathcal{G}_{\pi}$ and $X$ be a unit eigenvector of $A_{\alpha}(G)$ corresponding to $\rho(A_{\alpha}(G))$ in which $x_i$ is label of $u_i$. Then by Theorem \ref{Theorem2.7}, $G$ has a BFS-ordering, i.e., there exists a well-ordering of the vertices of $G$ such that
$$v_0\prec v_1\prec\cdots\prec v_{n-1},$$
$$x_{v_0}\geq x_{v_1}\geq \cdots\geq x_{v_{n-1}},$$
$$d(v_0)\geq d(v_1)\geq \cdots\geq d(v_{n-1}),$$
and
$$h(v_0)\leq h(v_1)\leq \cdots\leq h(v_{n-1}).$$
Let $V_i=\{v\mid v\in V(G), h(v)=i\}$ for $i=0,\cdots,p=h(v_{n-1})$. Then we can relabel the vertices of $G$ in such a way that $V_i=\{v_{i1}, \ldots,v_{is_i}\}$ with $x_{v_{i1}}\geq x_{v_{i2}}\geq\cdots\geq x_{v_{is_i}}$, $x_{v_{ij}}\geq x_{v_{i+1,k}}$ and $d(v_{ij})\geq d(v_{i+1,k})$ for $0\leq i\leq p-1$, $1\leq j\leq s_i$ and $1\leq k\leq s_{i+1}$. Clearly, $s_1=d(v_0)=d_0$. And the following claim must hold.

\noindent{\textbf{Claim.}} $x_{v_{01}}>x_{v_{1s_1}}$. Otherwise, let $x_{v_{01}}=x_{v_{1s_1}}$, then $x_{v_{01}}=x_{v_{11}}=\cdots=x_{v_{1s_1}}$. By $A_\alpha(G)X=\rho(A_\alpha(G))X$, we have $$\rho(A_\alpha(G))x_{v_{01}}=d_0x_{v_{01}},$$ which implies that $\rho(A_\alpha(G))=d_0$ and $G$ is a regular graph by Lemma \ref{Lemma2.9}, a contradiction.

Let $C$ be the unique cycle of $G$ and $v_{rq}$ be the vertex with smallest height among vertices in $V(C)$, that is, for any vertex $u\in V(C)$, we have
\begin{center}
$h(v_{rq})=r\leq h(u)$ and $d(v_{rq})\geq d(u).$
\end{center}
Then we can discuss by the following five cases.

\noindent{\textbf{Case 1.}} $v_{rq}=v_{01}$ and $v_{11}v_{12}\in E(G)$. Then $C=v_{01}v_{11}v_{12}v_{01}$ and $G$ is $\mathcal{U}^*_{\pi}$ obviously.

\noindent{\textbf{Case 2.}} $v_{rq}=v_{01}$, $v_{11}v_{12}\notin E(G)$ and $v_{1i}v_{1j}\in E(G)$ for $1\leq i<j\leq s_1$.

\noindent{\textbf{Subcase 2.1.}} $i=1$ and $3\leq j\leq s_1$. Without loss of generality, assume that $x_{v_{12}}>x_{v_{1j}}$ (otherwise, we can exchange $v_{12}$ and $v_{1j}$, then it is the same as case 1.), then $d(v_{12})\geq d(v_{1j})\geq2$ by Lemma \ref{Lemma2.5}. Note that there exists a vertex $v_{2t}\in V_2$ such that $v_{12}v_{2t}\in E(G)$ and $v_{1j}v_{2t}\notin E(G)$ since $G$ is unicyclic graph. $G'_1\in \mathcal{G}_{\pi}$ is a new graph that is obtained from $G$ by deleting edges $v_{11}v_{1j}$ and $v_{12}v_{2t}$ and adding edges $v_{11}v_{12}$ and $v_{1j}v_{2t}$. Since $x_{v_{11}}>x_{v_{2t}}$ and $x_{v_{12}}>x_{v_{1j}}$, $\rho(A_\alpha(G))<\rho(A_\alpha(G'_1))$ by Lemma \ref{Lemma2.3}, a contradiction.

\noindent{\textbf{Subcase 2.2.}} $1<i<j\leq s_1$. Without loss of generality, assume that $x_{v_{11}}>x_{v_{1i}}$ (otherwise, we can exchange $v_{11}$ and $v_{1i}$, then it is as same as subcase 2.1.), then $d(v_{11})\geq d(v_{1i})\geq2$ by Lemma \ref{Lemma2.5}. Note that there exists a vertex $v_{2t}\in V_2$ such that $v_{11}v_{2t}\in E(G)$ and $v_{1j}v_{2t}\notin E(G)$ since $G$ is unicyclic graph. $G'_2\in \mathcal{G}_{\pi}$ is a new graph that is obtained from $G$ by deleting edges $v_{11}v_{2t}$ and $v_{1i}v_{1j}$ and adding edges $v_{11}v_{1i}$ and $v_{1j}v_{2t}$. Since $x_{v_{11}}>x_{v_{1i}}$ and $x_{v_{1j}}\geq x_{v_{2t}}$, $\rho(A_\alpha(G))<\rho(A_\alpha(G'_2))$ by Lemma \ref{Lemma2.3}, a contradiction.

\noindent{\textbf{Case 3.}} $v_{rq}=v_{01}$ and $v_{1i}v_{1j}\notin E(G)$ for $1\leq i<j\leq s_1$. Then there must exist $v_{1k}\in V_1$ and $v_{2l}\in V_2$ such that $v_{1k}v_{2l}\in E(C)$ and $k\geq2$. Since $d(v_{11})=d_1\geq3$, there exists a vertex $v_{2t}\in V_2$ such that $v_{11}v_{2t}\in E(G)\backslash E(C)$ and $x_{v_{2t}}\leq x_u$, where $u\in N_{v_{11}}$ and $v\notin V(C)$. Then $v_{1k}v_{2t}\notin E(G)$ and $v_{2l}v_{2t}\notin E(G)$ since $G$ is unicyclic graph. The following claim about $G$ holds.

\noindent{\textbf{Claim.}} $x_{v_{11}}>x_{2l}$ or $x_{v_{1k}}>x_{v_{2t}}$. Otherwise, let $x_{v_{11}}=x_{2l}$ and $x_{v_{1k}}=x_{v_{2t}}$, then $x_{v_{11}}=x_{2l}=x_{v_{1k}}=x_{v_{2t}}$, $x_u=x_{v_{11}}$ for $u\in N_{v_{11}}\backslash \{v_{01}\}$ and $x_{v_{11}}=\cdots=x_{v_{1s_1}}$. By $A_\alpha(G)X=\rho(A_\alpha(G))X$, we get that
\begin{eqnarray*}
\rho(A_\alpha(G))x_{v_{2t}}&=&\alpha d(v_{2t})x_{v_{2t}}+(1-\alpha)\sum\limits_{uv_{2t}\in E(G)}x_u\\
&\leq& \alpha d(v_{2t})x_{v_{11}}+(1-\alpha)d(v_{2t})x_{v_{11}}\\
&=& d(v_{2t})x_{v_{11}}
\end{eqnarray*}
that is, $\rho(A_\alpha(G))\leq d(v_{2t})=d(v_{11})=d_1.$
\begin{eqnarray*}
\rho(A_\alpha(G))x_{v_{01}}&=&\alpha d(v_{01})x_{v_{01}}+(1-\alpha) d(v_{01})x_{v_{11}}\\
&=& d_0(\alpha x_{v_{01}}+(1-\alpha)x_{v_{11}})
\end{eqnarray*}
that is, $(\rho(A_\alpha(G))-\alpha d_0)x_{v_{01}}=(1-\alpha)d_0x_{v_{11}}.$
\begin{eqnarray*}
\rho(A_\alpha(G))x_{v_{11}}&=&\alpha d(v_{11})x_{v_{11}}+(1-\alpha)\sum\limits_{uv_{11}\in E(G)}x_u\\
&=& \alpha d(v_{11})x_{v_{11}}+(1-\alpha)d(v_{11-1})x_{v_{11}}+(1-\alpha)x_{v_{01}}\\
&=& d_1x_{v_{11}}+(1-\alpha)(x_{v_{01}}-x_{v_{11}})
\end{eqnarray*}
that is, $(\rho(A_\alpha(G))-d_1-\alpha+1)x_{v_{11}}=(1-\alpha)x_{v_{01}}.$
By the above equalities, we have
$$(\rho(A_\alpha(G))-d_1)[\rho(A_\alpha(G))-\frac{d_0(\alpha d_1-(1-\alpha)(1+2\alpha))}{d_1}]=\rho(A_\alpha(G))(\alpha-1)\frac{d_1-d_0-2\alpha d_0}{d_1}.$$
Note that $\rho(A_\alpha(G))>\frac{d_0(\alpha d_1-(1-\alpha)(1+2\alpha))}{d_1}$ by Lemma \ref{Lemma4.5}. Combining with $\rho(A_\alpha(G))\leq d_1$, we have $\rho(A_\alpha(G))=d_1=(1+2\alpha)d_0>d_0$, which contradicts $\rho(A_\alpha(G))\leq d_1\leq d_0$. Thus,  the claim must hold.

Now, we can construct a new graph $G'_3\in \mathcal{G}_{\pi}$ that is obtained from $G$ by deleting edges $v_{11}v_{2t}$ and $v_{1k}v_{2l}$ and adding edges $v_{11}v_{1k}$ and $v_{2l}v_{2t}$. Combining with the above claim, we have $\rho(A_\alpha(G))<\rho(A_\alpha(G'_3))$ by Lemma \ref{Lemma2.3}, a contradiction.

\noindent{\textbf{Case 4.}} $v_{rq}=v_{11}$. Then $v_{11}v_{1i}\notin E(G)$ for any $2\leq i\leq s_1$ and there must exist $v_{2j}\in V(C)$ such that $v_{11}v_{2j}\in E(G)$, then $d(v_{2j})\geq 2$. And then $d(v_{12})\geq2$. So there exists a vertex $v_{2t}\in V_2$ such $v_{12}v_{2t}\in E(G)$ since $G$ is unicyclic graph. By the same argument as the claim of case 3, it can be shown that $x_{v_{11}}>x_{v_{2t}}$ or $x_{v_{12}}>x_{v_{2j}}$. Then we can construct a new graph $G'_4\in \mathcal{G}_{\pi}$ that is obtained from $G$ by deleting edges $v_{11}v_{2j}$ and $v_{12}v_{2t}$ and adding edges $v_{11}v_{12}$ and $v_{2j}v_{2t}$. By Lemma \ref{Lemma2.3}, we have $\rho(A_\alpha(G))<\rho(A_\alpha(G'_3))$, a contradiction.

\noindent{\textbf{Case 5.}} $v_{rq}\neq v_{01},v_{11}$.

\noindent{\textbf{Claim.}} $h(v_{rq})<h(u)$ for any $u\in V(C)\setminus \{v_{rq}\}$. Otherwise, suppose that $v_{rl}\in V(C)$ and $v_{rl}\neq v_{rq}$ such that $h(v_{rq})=h(v_{rl})$, then there exist two internal disjoint paths $P_{l_1}$ and $P_{l_2}$ between $v_{rq}$ and $v_{rl}$ such that $C=P_{l_1}\cup P_{l_2}$. However, there exist two paths $P_{l_3}$ and $P_{l_4}$ from $v_{01}$ to $v_{rq}$ and $v_{rl}$, respectively. And $P_{l_3}\cup P_{l_4}\cup P_{l_1}$ contains another cycle of $G$, which contradicts $G$ is unicyclic graph.

Then there is a vertex $v_{r+1,t}\in V_{r+1}$ such that $v_{rq}v_{r+1,t}\in E(C)$, thus $d(v_{ki})\geq d(v_{r+1,t})\geq2$ for any $k\leq r$ and $1\leq i\leq s_k$. And we can find two vertices $v_{ri}$ and $v_{r+1,j}$ such that $v_{ri}v_{r+1,j}\in E(G)\setminus E(C)$. Obviously, $v_{rq}v_{r+1,j}, v_{r+1,t}v_{r+1,j}\notin E(G)$. $G'_4\in \mathcal{G}_{\pi}$ is a new graph that is obtained from $G$ by deleting edges $v_{rq}v_{r+1,t}$ and $v_{ri}v_{r+1,j}$ and adding edges $v_{rq}v_{ri}$ and $v_{r+1,t}v_{r+1,j}$. Combining with $x_{v_{rq}}\geq x_{v_{r+1,j}}$ and $x_{v_{ri}}\geq x_{v_{r+1,t}}$, we have $\rho(A_\alpha(G))\leq\rho(A_\alpha(G'_5))$ by Lemma \ref{Lemma2.3}. Furthermore, the smallest height of vertices of the cycle in $G_5$ is less than $r$. By repeating the use of Case 5 or Cases 2,3 and 4, we can get $\rho(A_\alpha(G))\leq\rho(A_\alpha(G'_5))<\rho(A_{\alpha}(\mathcal{U}^*_{\pi}))$, a contradiction.
\end{proof}

By Lemmas \ref{Lemma4.4} and \ref{Lemma4.6}, Theorem \ref{Theorem4.7} can be obtained immediately.


\begin{thebibliography}{13}
\bibitem{Berman A} A. Berman, J.S. Plemmons, Nonnegative Matrices in the Mathematical Sciences (New York: Academic Press), reprinted, SIAM, 1994.
\bibitem{T. Biyikolu} T. Biyiko\u{g}lu, J. Leydold, Graphs with given degree sequence and maximal spectral Rradius, Electronic Journal of Combinatorics, 15 (2008) 525-573.
\bibitem{B. Francescoelardo} F. Belardo, Enzo M.Li Marzi, Slobodan K. Simi\'{c}, J.F. Wang, On the spectral radius of unicyclic graphs with prescribed degree sequence, Linear Algebra Appl. 432 (2010) 2323-2334.
\bibitem{V. Nikiforov2} V. Nikiforov, Merging the $A$-and $Q$-spectral theories, Appl. Anal. Discrete Math. 11 (2017) 81-107.
\bibitem{V. Nikiforov4} V. Nikiforov, G. Pasten, O. Rojo, R.L. Soto, On the $A_\alpha$-spectra of trees, Linear Algebra Appl. 520 (2017) 286-305.
\bibitem{V. Nikiforov5} V. Nikiforov, O. Rojo, On the $\alpha-$index of graphs with pendent paths, Linear Algebra Appl. 550 (2018) 87-104.
\bibitem{Xue Jie1} J. Xue, H.Q. Lin, S.T. Liu, J.L. Shu, On the $A_\alpha$-spectral radius of a graph, Linear Algebra Appl. 550 (2018) 105-120.
\bibitem{Zhang xiao dong3} X.D. Zhang, The Laplacian spectral radii of trees with degree sequences, Discrete Math. 308 (2008) 3143-3150.
\bibitem{Zhang xiao dong2} X.D. Zhang, The signless Laplacian spectral radius of graphs with given degree sequences, Discrete Applied Mathematics, 157 (2009) 2928-2937.
\end{thebibliography}
\end{document}